\newtheorem{theorem}{Theorem}[section]
\theoremstyle{remark}
	\newtheorem{remark}{Remark}[section]
\renewcommand{\arraystretch}{1.28}
\numberwithin{equation}{section}
\numberwithin{figure}{section}
\numberwithin{table}{section}
\newcommand{\eps}{\varepsilon}
\newcommand{\Vector}[1]{\begin{pmatrix} #1 \end{pmatrix}}
\newcommand{\Matrix}[1]{\begin{bmatrix} #1 \end{bmatrix}}
\newcommand{\minmod}{\operatorname{minmod}}
\renewcommand{\mod}{\operatorname{mod}}
\newcommand{\divergence}{\operatorname{div}}
\newcommand{\w}{\mathbf w}
\newcommand{\x}{\mathbf x}
\newcommand{\e}{\mathbf e}
\renewcommand{\a}{\mathbf a}
\newcommand{\bnu}{ \boldsymbol{\nu} }
\def\softd{{\leavevmode\setbox1=\hbox{d}%
          \hbox to 1.05\wd1{d\kern-0.4ex{\char039}\hss}}}
\newcommand {\startenv} {\vskip 0.05em
\begin{tabular}{||l}\parbox[t]{0.95\linewidth}}
\newcommand {\stopenv} {\end{tabular}\vskip 0.05em}
\title{Numerical study of cancer cell invasion dynamics using adaptive mesh refinement: the urokinase model}
	\author{
	Niklas Kolbe
		\thanks{Institute of Mathematics, University of Mainz, Germany\hfill {\tt kolbe@uni-mainz.de}}~{},
	Jana Kat'uchov\'a
		\thanks{Faculty of Medicine, University of Ko\v{s}ice, Slovakia \hfill {\tt katuchova@gmail.com}}~{},
	Nikolaos Sfakianakis
		\thanks{Institute of Mathematics, University of Mainz, Germany \hfill {\tt sfakiana@uni-mainz.de}}~{},
	Nadja Hellmann
		\thanks{Institute of Molecular Biophysics, University of Mainz, Germany \hfill {\tt nhellman@uni-mainz.de}}~{},
	and \\
	M\'aria Luk\'{a}\v{c}ov\'{a}-Medvi\softd ov\'{a}
		\thanks{Institute of Mathematics, University of Mainz, Germany\hfill {\tt lukacova@uni-mainz.de}}
	}
\date{\today} 
\begin{document}
\maketitle
\begin{abstract}
In the present work we investigate the chemotactically and proteolytically driven tissue invasion by cancer cells. The model employed is a system of advection-reaction-diffusion equations that features the role of the serine protease urokinase-type plasminogen activator. The analytical and numerical study of this system constitutes a challenge due to the merging, emerging, and travelling concentrations that the solutions exhibit.

Classical numerical methods applied to this system necessitate very fine discretization grids to resolve these dynamics in an accurate way. To reduce the computational cost without sacrificing the accuracy of the solution, we apply adaptive mesh refinement techniques, in particular h-refinement. Extended numerical experiments exhibit that this approach provides with a higher order, stable, and robust numerical method for this system. We elaborate on several mesh refinement criteria and compare the results with the ones in the literature.

We prove, for a simpler version of this model, $L^\infty$ bounds for the solutions, we study the stability of its conditional steady states, and conclude that it can serve as a test case for further development of mesh refinement techniques for cancer invasion simulations.
\end{abstract}

\noindent
{\bf Key words:} cancer modelling, chemotaxis, merging and emerging concentrations, finite volume method, IMEX, adaptive mesh refinement

\noindent
{\bf AMS subject classification:}  	92B05, 35Q92, 65M08, 65M50

\section{Introduction}\label{intro}
Cancer is one of the most frequent causes of death worldwide. As reported in \cite{katuchova3} by 2020  about 70\% of all cancer-related death will occur in developing countries, where the survival rate is only about 20\%-30\% due to a late diagnosis. The development of cancer involves different  sub-processes like growth, vascularization, tissue invasion, and metastasis, \cite{jana}.

Since the 1950s, the mathematical description of complex biological system has gained increasing interest and has become a very active research field, e.g. \cite{Nordling.1953, Armitage.1954, Fisher.1958}. The growth and invasion of cancer was subject of many theoretical studies, concentrating on different aspects and employing different mathematical approaches; see for example cancer growth \cite{Alt.1985,Preziosi.2003,Deutsch.2013}; cancer cells invasion of the Extracellular Matrix (ECM) \cite{Perumpanani.1996, Anderson.2000, Turner.2002, Frieboes.2006, Gerisch.2008, Poplawski, Szymanska.2009, Painter.2010}; cancer stem cells modelling \cite{Gupta.2011, Czochra.2012}, to name but a few.

This is the first in a series of papers where we address a mathematical approach to describe the invasion of cancer cells into the extracellular matrix and which involve the degeneration of the adjacent tissue by the cancer cells and their migration into nearby areas. In these works we model, analyse, and numerically resolve different biological theories that address the pathway of chemical interactions taking place during the invasion of the cancer cells. 

In the current paper, in particular, our primer objective is to lay the foundation and to propose/present the numerical treatment that we use in our studies. We focus on a model introduced by Chaplain and Lolas in \cite{lolas2003phd,chaplain2005mathematical} and further analyzed in \cite{andasari2011mathematical,gerisch2006robust,Gerisch.2008, Szymanska.2009, kolbe2013master}. It describes the dynamics of cancer invasion using a deterministic model of macroscopic quantities. The invasion process is dominated by directed cell movement due to the gradient of extracellular chemicals (chemotaxis) and due to the gradient in the structure of the extracellular matrix (haptotaxis) that is mathematically described according to Patlak and Keller-Segel \cite{patlak1953random, keller1971model}. The model includes further the interactions of the cancer cells with different proteins and diffusion of the cells and the proteins.

Due to the model dynamics, suitable, accurate numerical methods of high order are needed for the simulations. To this end we employ, at first, a finite-volume method introduced in \cite{kurganovnumerical}, augmented with several time integration methods. We notice in one-dimensional experiments that a large number of grid cells is needed to properly resolve the dynamics, albeit the methods employed are stable and second order accurate. The corresponding two-dimensional experiments reveal similar dynamics. Using though a grid as fine as in the one-dimensional case, renders the computation prohibitively expensive.

Such difficulties are not new in the mathematical literature; there are several examples where the convergence of a numerical method depends heavily on the size of the discretization cells, and at the same time, the use of uniformly fine discretization grids is not satisfactory due to the increased computational cost. In such cases, mesh refinement techniques are often seen as an alternative numerical treatment. With such methods, one alters the local density of the discretization grid either by refining the mesh locally. It has been seen, time and again, that such methods can improve the quality of the numerical solutions, and at the same time reduce the computational cost, see e.g. \cite{Sfakianakis.2010, kroner2000posteriori, Sfakianakis.2013a, ohlberger19999adaptive, puppo2012adaptive, Sfakianakis.2013b}. Therefore, we investigated the properties of the cancer-invasion-model when employing these mesh refinement techniques, in particular h-refinement in the form of cell bisection based on properly chosen estimator functions.

We study analytically a reduced chemotaxis-haptotaxis model with logistic growth, and compare it with the original system in terms of qualitative behaviour of their respective solutions. Moreover, we investigate the conditional stability of the steady states for particular parameter ranges and justify the similarities in the transient behaviour of merging, emerging and travelling concentrations that both systems exhibit numerically. Furthermore we prove $L^\infty$ bounds on the solutions of both systems; allowing hence to use the smaller model and the corresponding parameter set, as a test case for the development of the mesh refinement techniques for the cancer invasion models.

The present paper is organized as follows: In Section \ref{bio} we describe in more details some important physiological aspects of tumor biology. Based on the biological understanding we explain in Section \ref{model} the corresponding mathematical model describing tumor proliferation and invasion. In Sections \ref{methods} and \ref{experiments} we describe the numerical methods, and mesh refinement techniques, which we have used for the approximations and discuss the results of numerical experiments.

\section{Tumor Biology}\label{bio}

Tumorgenesis is a multistep process, in which normal cells progressively convert to cancer cells. This process is associated with various changes in cell physiology common to most of the cancers. These are, in particular, self-sufficiency in growth signals, insensitivity to inhibitory growth signals, evasion of programmed cell death, limitless replicative potential, sustained angiogenesis, tissue invasion and metastasis, and immunoediting \cite{katuchova1, katuchova2, katuchova3}.

Tumor development is directly and indirectly influenced by paracrine as well as autocrine signals. Such factors include angiogenesis factors, growth factors, chemokines (signaling molecules originally characterized by their ability to induce chemotaxis), cytokines, hormones, enzymes, cytolytic factors, and so forth, which may promote or reduce tumor growth \cite{katuchova3}. One important property which distinguished tumor cells from normal cells is their ability to proliferate infinitely. This is the result of changes in cell death signaling pathways (apoptosis)\cite{katuchova4}. Angiogenesis is a next important factor for tumor growth. Growth of blood capillaries into the tumor is necessary for supply of nutrient and oxygen, and is induced by growth factors, such as Vascular Endothelial Growth Factor (VEGF). Angiogenesis is also required for metastases and tissue invasion of the tumor. Metastatic tumors are the cause of about 90\% of human cancer deaths \cite{katuchova5}. Spreading of metastases is possible through hematogenous and lymphogeneous pathways, which guide the metastases to other locations in the body, where they settle (intravasation). Both intravasation and the detachment from the original location (extravasation)  is characterized by changes in the extracellular matrix surrounding the tumor and its interactions with tumor cells. 

Molecular analysis indicates the importance of chemotactic motion in understanding of the outgrowth of tumor cells. As reported in \cite{katuchova15} importance of chemokines in tumor progression was obtained, e.g,  for breast cancer cells that typically metastasized in bone marrow, liver, lymph nodes and lung. These organs were found to secrete CXCL12, the ligand for the chemokine receptor CXCR4, which is enriched on breast cancer cells but not in normal breast epithelial cells \cite{katuchova15}.

Chemokines are now known to affect many aspects of tumor development such as angiogenesis and expression of cytokines, adhesion molecules, and proteases, and support of cancer cell migration. Thus chemotaxis plays an essential role in the successful outgrowth of tumors to the preferential organs. 

Another important factor that influences tumor growth is the immune system.  Physiologically normal immune system can effectively eliminates highly immunogenic tumor cells. However,  it can also happen that  tumor cells with a reduced immunogenicity can develop and further evase. Thus, the immune system has a selective function for tumor variants.  As time evolves  this selection leads to the growth of tumor cells that fail to  be controlled by the immune system.   In fact,  the interaction between a healthy immune system and tumor cells develops  in three phases:  the elimination, the equilibrium and the escape phase. The controlling role of the immune system, that determines whether and how tumors evolve in time is called immunoediting process \cite{katuchova17}.

Besides evasion of the immune system, chemotaxis towards CXCL12 is a key process also in the invasion of the Extracellular Matrix (ECM), see e.g. \cite{Condeelis.2011}. In more details, organs with high levels of specific chemokines can direct tumor cells that express the corresponding receptor, to their site; a result of chemotactic response and ECM invasion. This is the case for the pairs CXCR4–CXCL12 in bone metastasis of breast and prostate tumors.

Furthermore, the so-called urokinase plasminogen system plays an important role in cancer progression and metastasis. The proteins of this network on one hand help the tumor cell to remodel the extracellular matrix, so that it can detach from the original site and re-attach in another location. In addition, components of the network act as chemokines in order to guide the direction of tumor migration in this process. In the following, we describe the most important aspects of the corresponding mathematical model as introduced in \cite{chaplain2005mathematical}. 

\paragraph{The urokinase plasminogen activation system}~\\

Migration of (cancer) cells is a regulated process which involves de-attachment and attachment of the cell to the cellular matrix. The urokinase- plasminogen activation system is involved in several ways in this complex mechanism. The central role in this system play the urokinase-plasminogen activator (uPA) and its receptor on the cancer cell surface (uPAR). Although uPA is a protease and converts the protease plasminogen into its active form plasmin, also non-catalytical function is involved in the regulation of cancer cell migration. 

The protease uPA is secreted by the cancer cell in an inactive form (pro-uPA). This pro-form binds to uPAR and then can be activated by the protease plasmin. Receptor bound uPA has several functions: a) it can activate in turn plasmin by cleaving its pro-form plasminogen b) it enhances the affinity of uPAR to vitronectin \cite{Wei.1994}  and integrins c) uPA serves as chemotactic molecule and this action requires binding to uPAR. Activation of plasmin by uPA can also occur in solution, but is much enhanced if both enzymes are membrane bound. 

Vitronectin is a component of the extracellular matrix (ECM) and responsible for the attachment of cells to ECM. Integrins are transmembrane proteins which are responsible for signal transduction from the outside to the inside of the cell. Vitronectin and uPA/uPAR binds to integrins and by this are involved in cell signal events.   

All three types of function are regulated by the inhibitor PAI-1 (plasminogen activator inhibitor 1). This protein binds to uPA in the soluble and in the membrane-bound form, inhibiting its proteolytic function. Furthermore, it binds also to vitronectin \cite{Seiffert.1994}, and by this inhibits binding of this cell-adhesion molecule to uPA/uPAR, disrupting cell-ECM-contacts and also binding to integrins, interfering with cell-signalling. Furthermore, the complex uPAR/uPA/PAI-1 is removed from the cell surface by endocytosis, triggering further signaling pathways related to cancer migration. Besides cell signaling 
pathways related to cell migration, formation of the uPA/uPAR complexes was shown the increase proliferation of the cell. Chemotaxis induced by uPA is also inhibited by PAI-1. Furthermore, PAI-1 itself acts as chemotactic molecule and vitronectin has a similar role as haptotactic molecules, guiding the cell's movement on the ECM. The catalytic function of uPA, namely the activation of plasmin, is regarded as essential step in the cancer-cell's ability to remodel the ECM. 

The ECM is composed of a large number of biochemically and structurally diverse components, such as  proteins, proteoglycans, and glycoproteins. Formation of fibrillar structures by some of the proteins (eg. collagens, elastin) cause the particular mechano-elastic properties of the ECM. These structures are further connected by other proteins such as fibronectin or laminin.

Since intact ECM is a rather tight mesh, offering only small pores for the cell to move through, cleavage of ECM proteins by the cancer cells greatly enhances motility. However, which ECM-components can be cleaved by plasmin in vivo is not well investigated, see \cite{Deryugina.2012}, the best corrobated examples being laminin and fibronectin; however, MMPs which are activated by plasmin seem to have multiple roles in the regulation of tumor growth and progression (see  \cite{Kessenbrock.2010} for an overview).

\section{Mathematical model}\label{model}
In the literature already several mathematical models for various aspects of cancer invasion have been presented. The model for the cancer invasion that we investigate in this work is based on chemotactic/haptotactic movement, diffusion, enzyme interaction and mass conservation. It was first proposed \cite{chaplain2005mathematical, andasari2011mathematical}, later studied in \cite{andasari2011mathematical,gerisch2006robust,Gerisch.2008, kolbe2013master, Szymanska.2009}, and will be shortly presented in this section.

In this model, the ECM is represented by the component vitronectin ($v$). Furthermore, uPAR is not modeled as a separate entity, but is included via the cell-density. The other components (uPA, PAI-1, plasmin) are included explicitly, denoted by $u$, $p$, $m$. The differential equations for the different components are described below.
\begin{description}
\item[\textnormal{\textit{Cancer cells.}}]
	The spatio- temporal behavior of cancer cells $c$ is assumed to be determined by

	(i) random motion, modeled as diffusion; chemotaxis due to the gradients of (ii) uPA, and (iii) PAI-1; (iv) haptotaxis due to gradients of the ECM, chemo- and haptotaxis are modeled according to Keller and Segel \cite{keller1971model}; (v) proliferation of the cells themselves, which is assumed to be restricted by the cell number; (vi) increase of proliferation due to uPA/uPAR compounds.

	Consequently, the equation describing the cancer cell dynamics itself reads
	\begin{equation}\label{cancer}
		\partial_t c = \underbrace{D_c \triangle c}_{\text{(i)}} - \operatorname{div}(\underbrace{\chi_u c \nabla u}_{\text{(ii)}}  + \underbrace{\chi_p c \nabla p}_{\text{(iii)}} + \underbrace{\chi_v \nabla v}_{\text{(iv)}}) + \underbrace{\mu_1 c\left(1-\frac{c}{c_0}\right)}_{\text{(v)}}+ \underbrace{\phi_{1,3}~cu}_{\text{(vi)}} .
	\end{equation}

\item[\textnormal{\textit{Extracellular matrix.}}]
	As emphasized in the previous section, the extracellular matrix $v$ is a static object and thus no transport terms are needed in modeling it. ECM is represented here as vitronectin, for which the following dynamics are being considered

	(i) reconstruction of ECM is mathematically expressed with logistical growth; (ii) degradation of ECM is assumed to happen proportional to the product of the densities of plasmin and vitronectin, modeling in a rather global form any kind of degradation of ECM by plasmin, direct or indirect; (iii) release of $v$ from PAI-1/VN complexes due to competition of uPA for PAI-1; (iv) release of $v$ from PAI-1/VN complexes due to competition of uPA for PAI-1.

	The equation for the ECM dynamics finally reads
	\begin{equation}\label{VN}
		\partial_t v =\underbrace{\mu_2v\left(1-\frac{v}{v_0}\right)}_{\text{(i)}} -\underbrace{\delta vm}_{\text{(ii)}}  + \underbrace{\phi_{2,1}~up}_{\text{(iii)}} - \underbrace{\phi_{2,2}~vp}_{\text{(iv)}}.
	\end{equation}

\item[\textnormal{\textit{Urokinase plasmin activator.}}]
	For this component, $u$, the following processes are taken into account: 

	(i) uPA diffuses as chemical; (ii) uPA binding  to  cancer cell surface via uPAR receptors ; (iii) complex formation with the inhibitor PAI-1; (iv) secretion of uPA  by the cancer cells.

	Thus it is assumed that the following equation holds for the behavior of the urokinase dynamics
	\begin{equation}\label{uPA}
		\partial_t u = \underbrace{D_u \triangle u}_{\text{(i)}} - \underbrace{\phi_{3,3}~cu}_{\text{(ii)}} - \underbrace{\phi_{3,1}~pu}_{\text{(iii)}} +\underbrace{\alpha_3 c}_{\text{(iv)}}.
	\end{equation}

\item[\textnormal{\textit{Plasminogen activator inhibitor.}}]
	For this component, $p$, a similar set of terms is employed:

	(i) diffusion, similar to uPA; (ii) binding to uPA; (iii) binding to vitronectin; (iv) production by plasmin.

	Hence, the corresponding equation reads
	\begin{equation}\label{PAI-1}
		\partial_t p = \underbrace{D_p \triangle p}_{\text{(i)}} - \underbrace{\phi_{4,1}~pu}_{\text{(ii)}} - \underbrace{\phi_{4,2}~pv}_{\text{(iii)}} + \underbrace{\alpha_4 m}_{\text{(iv)}}.
	\end{equation}

\item[\textnormal{\textit{Plasmin.}}]
The ECM degenerating enzyme plasmin $m$ is controlled by the following dynamics

(i) chemical diffusion; (ii) it is activated by uPA/uPAR complexes; (iii) activation of plasmin is inhibited by PAI-1, which binds to uPA; iv) PAI-1/VN compounds result indirectly in production of plasmin, since bound PAI-1 does not inhibit plasmin formation anymore; (v) it is deactivated by plasmin inhibitors.

Thus the plasmin dynamics are modeled as follows
\begin{equation}
 \partial_t m = \underbrace{D_m \triangle m}_{\text{(i)}} + \underbrace{\phi_{5,3}~uc}_{\text{(ii)}} -\underbrace{\phi_{5,1}~pu}_{\text{(iii)}} + \underbrace{\phi_{5,2}~pv}_{\text{(iv)}}  -\underbrace{\alpha_5 m}_{\text{(v)}}.
\end{equation}
\end{description}

To formulate the model in non-dimensional variables, rescaling takes place using reference length $L=0.1$ cm, a reference diffusion coefficient $D=10^{-6}$ $\text{cm}^2\text{s}^{-1}$, a rescaled time parameter $t=L^2D^{-1}$, and reference densities $C$, $V$, $U$, $P$, $M$ of the cancer cells, vitronectin, uPA, PAI-1 and plasmin respectively.

As we only consider dimensionless variables, we keep the former notations for the densities of cancer cells, ECM, and proteins, and end up with the system
\begin{equation}\label{chaplolsystem}
\left\{
\begin{aligned}
\partial_t c &= D_c \Delta c &- \operatorname{div}(\chi_u c\nabla u + \chi_p c \nabla p + \chi_v c\nabla v) + \phi_{1,3}cu&+\mu_1c(1-c),\\
\partial_t v & = &- \delta vm + \phi_{2,1}up - \phi_{2,2}vp &+ \mu_2v(1-v),\\
\partial_t u &= D_u \Delta u &-\phi_{3,1}pu - \phi_{3,3}c u &+ \alpha_3 c,\\
\partial_t p &= D_p \Delta p &- \phi_{4,1}pu - \phi_{4,2}pv &+ \alpha_{4}m,\\
\partial_t m &= D_m \Delta m &- \phi_{5,1} pu + \phi_{5,2} pv + \phi_{53}uc &- \alpha_{5}m.
\end{aligned}
\right.
\end{equation}

To simplify the presentation in the following sections we introduce the short notations
\begin{equation}
	\w = \Vector{c\\ v\\ u\\ p\\ m},~D(\w)=\Vector{D_c \Delta c\\ 0\\ D_u \Delta u \\D_p \Delta p \\D_m \Delta m},
 ~A(\w) = \Vector{\operatorname{div}(\chi_u c\nabla u + \chi_p c \nabla p + \chi_v c\nabla v)\\0\\0\\0\\0},
\end{equation}
of the variables, diffusion, advection and reaction vectors
\begin{equation}\label{chaplolreaction}
  R(\w) = \begin{pmatrix} \phi_{13}cu+\mu_1c(1-c)\\- \delta vm + \phi_{21}up - \phi_{22}vp + \mu_2v(1-v)\\-\phi_{31}pu - \phi_{33}c u + \alpha_3 c\\
- \phi_{41}pu - \phi_{42}pv + \alpha_{4}m\\- \phi_{51} pu + \phi_{52} pv + \phi_{53}uc - \alpha_{5}m \end{pmatrix},
\end{equation}
respectively. We note that the vectors for advection and diffusion include derivatives of $\w$ as well which we have not included in our notation for brevity. With this notation the system \eqref{chaplolsystem} recasts into
\begin{equation}\label{rdt}
 \w_t + A(\w) = D(\w) + R(\w).
\end{equation}

The parameter set $\mathcal{P}$, cf. e.g. \cite{andasari2011mathematical}, which we also consider in this work is given by
\begin{equation}\label{params}
\left\{
\begin{array}{llll}
D_c = 3.5 \cdot 10^{-4},	& 	\chi_u = 3.05\cdot 10^{-2},		& 	\mu_1 = 0.25,	&	\alpha_3 = 0.215,\\
D_u = 2.5\cdot 10^{-3},		& 	\chi_p=3.75\cdot 10^{-2},		&	\mu_2=0.15,		&	\alpha_4 = 0.5,\\
D_p=3.5\cdot 10^{-3}, 		&	\chi_v = 2.85\cdot 10^{-2},		&	\delta=8.15,	&	\alpha_5=0.5, \\
D_m=4.91\cdot 10^{-3},		&	\phi_{13} = 0,					& 	\phi_{21}=0.75,	&	\phi_{22}= 0.55,\\	
\phi_{31}=0.75, 			&	\phi_{33}=0.3,					&	\phi_{41}=0.75, &	\phi_{42}=0.55,\\
\phi_{51}= 0,				&	\phi_{52}=0.11,					&	\phi_{53}=0.75,
\end{array}
\right.
\end{equation}

These parameters were estimated by fitting the numerical results to in-vitro experiments, see \cite{chaplain2005mathematical, lolas2003phd} for details. Although we do not address the parameters in detail, we note that chemo-, haptotactical sensitivities $\chi_p,~\chi_u$ and $\chi_v$ are approximately a factor of hundred times higher than the coefficient $D_c$. Thus we expect the motion of the cancer cells to be dominated by taxis. We further note that the parameter $\delta=8.15$ is much higher in comparison with the  other parameters used, due to the crucial role of tissue degeneration by plasmin.

\section{Analytical properties of a chemotaxis-haptotaxis model with logistic source}\label{analysis}
Solutions of the system \eqref{chaplolsystem} feature heterogeneous spatio-temporal dynamics in the form of emerging, merging and traveling concentrations. This was observed numerically and examined by steady state analysis in \cite{andasari2011mathematical}. In the latter it is commented that these dynamics are the results of the destabilization of a single steady state of $\w_t = R(\w)$ by advection.

To give more details, let $\hat{\w}\in \mathbb{R}^n$ be a positive steady state of $\w_t = R(\w)$. A small pertubation $\w(t,x)=\hat{\w}+\eps \tilde \w(t,x)$ evolves according to
\[\tilde \w_t = J_R(\hat{\w})\tilde \w + J_T(\hat \w) \Delta \tilde \w + \mathcal{O}(\eps^2),\]
where $J_R(\w)$ and $J_T(\w)$ are the Jacobians of the reaction $R$ and the general transport operator $T$ such that $\divergence T(\nabla \w)=D(\mathbf w)-A(\w)$. We denote by $J_R(\w)=D_{\w}R(\w)$,  $J_T(\w)=D_{\nabla \w}T(\nabla \w)$, respectively.


Assuming periodic boundary conditions on $(-M,M)^d$ we write the pertubation $\tilde \w$ in a Fourier series representation,
\[\tilde \w(t,x) = \sum_{\bnu \in \mathbb{Z}^d} \a_{\bnu}(t) \exp(i \bnu \pi M^{-1}\cdot x), \quad
\a_{\bnu} :~ [0,T]\rightarrow \mathbb{R}^n.\]
The evolution of the coefficient functions is determined by the following system of ordinary differential equations
\[\a_{\bnu}'(t) = \left( J_R(\hat \w)- k J_T(\hat \w)\right)\a_{\bnu}(t),\quad \a_{\bnu}(0) = \a_{\bnu,0}, \quad \bnu \in \mathbb{Z}^d,\] where \[ \tilde \w(0,x) = \sum_{\bnu \in \mathbb{Z}^d} \a_{\bnu,0} \exp(i \bnu \pi M^{-1} \cdot x), \quad k = \|\bnu \pi M^{-1}\|_2^2.\]
Thus we can see, that pertubations due to wave number $k$ grow, if
\[\lambda_{k}(\hat \w) = \max\left\{\operatorname{Real}\left\{ \operatorname{spec}\left( J_R(\hat \w)- k J_T(\hat \w) \right)\right\}\right\} > 0;\]
otherwise they are damped. In the case of parameter set $\mathcal{P}$, there is a range of numbers $k$ with $\lambda_{k} > 0$, which vanishes if chemotaxis is neglected (that is if $\chi_u=\chi_p=\chi_v=0$).

In order to understand the dynamics of \eqref{chaplolsystem}, such as the merging and emerging phenomena, and to develop an efficient, problem suited, adaptive numerical scheme, let us first consider the following chemotaxis-haptotaxis model with the logistic source term
\begin{equation}\label{small_system}
 \left\{
 \begin{aligned}
\partial_t c &= D_c \Delta c -\nabla \cdot(\chi c \nabla u) + \mu c(1-c), \\
\partial_t u &= D_u \Delta u + \alpha c -\beta u.
 \end{aligned}
 \right.
\end{equation}
The system \eqref{small_system} provides similar dynamics as \eqref{chaplolsystem}. This has been examined in numerical experiments in \cite{painter2011spatio}. In what follows, we demonstrate that the model \eqref{small_system} also obtains increasing modes of pertubations due to chemotaxis. If we neglect the advection terms in \eqref{small_system} we get a system of ordinary differential equations for $\w = (c, u)^T$ with steady state $\hat \w = (1, \frac{\alpha}{\beta})^T$, in which case, the Jacobians of reaction and generalized transport are given by
\[J_R(\w) = \begin{pmatrix} \mu(1-2c) & 0 \\
\alpha & -\beta \end{pmatrix},
\quad
J_T(\w) = \begin{pmatrix} D_c & -\chi c \\
0 & D_u \end{pmatrix}.\]
Choosing the parameters
\[D_c = 5.25 \cdot 10^{-3},\quad D_u=2.5\cdot 10^{-3},\quad
\chi = 4 \cdot 10^{-2}, \quad
\mu = 0.1,\quad \alpha = 0.115, \quad\beta = 0.4,\]
we obtain a range of wave numbers $k$ with a positive value of $\lambda_{k}(\hat \w)$, as it is shown in Figure \ref{disp_rel}. This implies that small pertubations due to chemotaxis can increase in time. Analogous behaviour has been obtained for system \eqref{chaplolsystem} in \cite{andasari2011mathematical} and for \eqref{small_system} with $\alpha = \beta = 1$ in \cite{painter2011spatio}.

\begin{figure}
 \centering
 \includegraphics[width=1\textwidth]{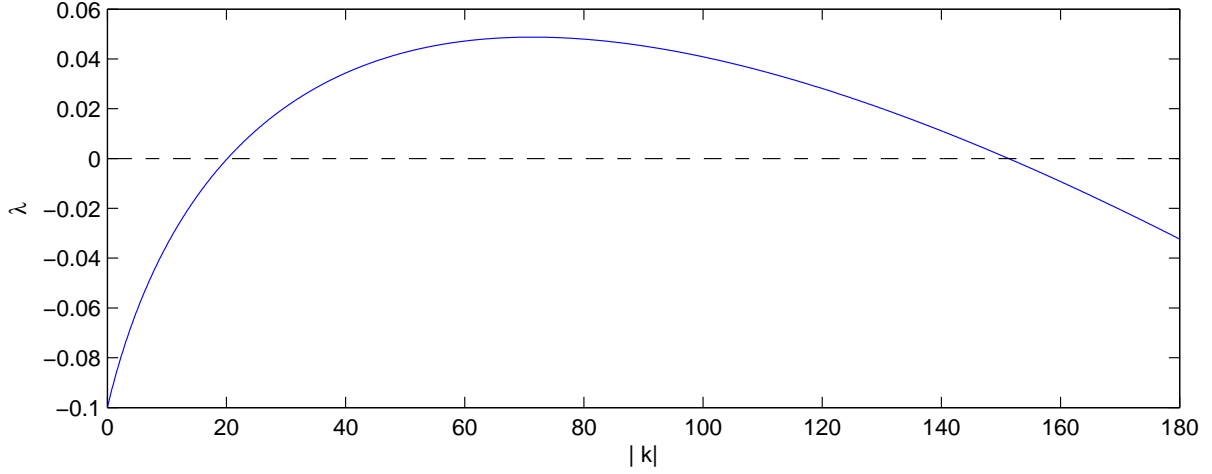}
 \caption{A plot of $\lambda_{k}(\hat \w)$ against $k$. The amplification factor is positive for a wide range of modes. } \label{disp_rel}
\end{figure}

In \cite{chertock2012chemotaxis,kurganovnumerical} the authors studied the so-called saturated chemotaxis model and were able to show the global existence and uniqueness of a classical solution for the chemotaxis model of one and two species, respectively. In what follows we analyze the chemotaxis-haptotaxis model with a logistic source term \eqref{small_system}, modified to include saturated chemotaxis flux, i.e.

\begin{equation}\label{small_system_reg}
 \left\{
 \begin{aligned}
\partial_t c &= D_c \Delta c -\nabla \cdot( c\, Q(\chi \nabla u)) + \mu c(1-c), \\
\partial_t u &= D_u \Delta u + \alpha c -\beta u,
 \end{aligned}
 \right.
\end{equation}
where $Q$ is given by
\begin{equation}\label{Q_func}
 Q(\chi \nabla u) =
 \begin{dcases}
  \chi \nabla u, 			&\text{if }\chi|\nabla u| \leq S, \\
  \left( \frac{\chi |\nabla u| -S}{\sqrt{1+(\chi |\nabla u| -S)^2}}+S\right)\frac{\nabla u}{|\nabla u|}, &\text{otherwise,}
 \end{dcases}
\end{equation}
for a positive constant $S$. The function $Q$ limits the flux by $\|Q(\chi \nabla u)\|<S+1=C$.
\begin{theorem}\label{Linfty_estimates}
	We consider \eqref{small_system_reg} on a compact set $\Omega\subset \mathbb{R}^d$ with a Lipschitz continuous boundary $\partial \Omega$ with the outer normal $\mathbf n$ and the boundary conditions
	\begin{equation}
		\frac{\partial c}{\partial \mathbf n} = \frac{\partial u}{\partial \mathbf n} = 0, \quad x\in\partial\Omega,~t>0.
	\end{equation}
	Let $(c(x,t),~u(x,t))$ be a positive classical solution with bounded non-negative initial data. Then the following estimates hold,
	\begin{align}
		c(x,t) &\leq   \tilde{\tilde C} \max \left\{ \| c_o\|_{L^\infty(\Omega)}, ~ \|c_0\|_{L^1(\Omega)}+ \frac{T\,\mu |\Omega|}{4}\right\},\\
		u(x,t) &\leq \|u_0\|_{L^\infty(\Omega)}+\frac{\tilde{\tilde C} \alpha}{\beta} \max \left\{ \| c_o\|_{L^\infty(\Omega)}, ~ \|c_0\|_{L^1(\Omega)}+ \frac{T\,\mu |\Omega|}{4}\right\},
	\end{align}
	for all $x\in\bar \Omega$ and $t\in[0,T]$, where
	\begin{equation}
		\tilde{\tilde C} = \tilde C \left( 1 + \frac{2D_c \mu}{C^2}\right)^{2} \left( 1 + \frac{\sqrt{C^2+2\mu D_c}}{D_c}\right)^{2d},
	\end{equation}
	and $\tilde C$ depends on $\Omega$ only.
\end{theorem}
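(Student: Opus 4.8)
The plan is to derive the $L^\infty$ bound for $c$ first and then read off the bound for $u$, since the second equation of \eqref{small_system_reg} is linear in $u$ once $c$ is under control. The single structural fact that makes everything work is the saturation built into \eqref{Q_func}: for $c\ge 0$ one has $\|c\,Q(\chi\nabla u)\|\le C\,c$ pointwise, so the chemotactic transport in the $c$-equation is nothing worse than advection by a velocity field of magnitude at most $C$. This is exactly the gain over the unsaturated model \eqref{small_system}, where $\nabla u$ is unbounded a priori.

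As a preliminary step I would extract the $L^1$ estimate that already sits inside the asserted $\max$. Integrating the $c$-equation over $\Omega$, the diffusion term drops by $\partial c/\partial\mathbf n=0$ and the transport term drops as well, because $\partial u/\partial\mathbf n=0$ forces $Q(\chi\nabla u)\cdot\mathbf n=0$ on $\partial\Omega$ (in both branches of \eqref{Q_func} the vector $Q$ is parallel to $\nabla u$, hence tangential). Using $c\ge 0$ and $c(1-c)\le\tfrac14$ gives $\frac{\dd}{\dd t}\int_\Omega c\,\dd x\le \frac{\mu|\Omega|}{4}$, so that $\|c(\cdot,t)\|_{L^1(\Omega)}\le \|c_0\|_{L^1(\Omega)}+\frac{T\mu|\Omega|}{4}$ for all $t\in[0,T]$.

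The heart of the argument is a Moser--Alikakos iteration upgrading this $L^1$ control to $L^\infty$. Testing the $c$-equation with $c^{p-1}$ and integrating by parts (all boundary terms vanish as above), I would set $w=c^{p/2}$ and use $\|Q\|\le C$ with Young's inequality to absorb the transport term into half of the diffusive dissipation $\|\nabla w\|_{L^2}^2$; choosing the Young weight proportional to $D_c/(pC)$ and discarding the nonpositive super-logistic term $-p\mu\int_\Omega c^{p+1}\,\dd x$ yields
\[\frac{\dd}{\dd t}\int_\Omega c^p\,\dd x \le -\frac{2D_c(p-1)}{p}\int_\Omega|\nabla w|^2\,\dd x + p\Big(\frac{(p-1)C^2}{2D_c}+\mu\Big)\int_\Omega c^p\,\dd x,\]
in which the combinations $C^2/(2D_c)$ and $C^2+2\mu D_c$ featuring in $\tilde{\tilde C}$ already appear. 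I would then interpolate the reaction integral against the dissipation by the Gagliardo--Nirenberg inequality on $\Omega$ (exponent $\theta=d/(d+2)$, which is how the dimension $d$ enters), anchored at the lower norm $\|w\|_{L^1(\Omega)}=\|c\|_{L^{p/2}(\Omega)}^{p/2}$, and close the resulting recursion along $p_k=2^k$. Taking $p_k$-th roots and letting $k\to\infty$ produces the stated bound, the anchoring $L^1$ norm being replaced in the limit by $\max\{\|c_0\|_{L^\infty(\Omega)},\,\|c_0\|_{L^1(\Omega)}+\tfrac{T\mu|\Omega|}{4}\}$: the $\|c_0\|_{L^\infty}$ alternative controls the homogeneous (initial-data) contribution of each ordinary differential inequality, the $L^1$ alternative the forced contribution.

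Given $c\le\bar c:=\tilde{\tilde C}\max\{\cdots\}$, the $u$-bound is immediate from the comparison principle: the spatially constant function $U(t)=\|u_0\|_{L^\infty(\Omega)}e^{-\beta t}+\frac{\alpha\bar c}{\beta}(1-e^{-\beta t})$ is a supersolution of the $u$-equation, whence $u\le U\le\|u_0\|_{L^\infty(\Omega)}+\frac{\alpha\bar c}{\beta}$, which is precisely the asserted estimate. The one genuinely delicate point is the bookkeeping in the iteration: one must show that the product of the level-$k$ constants---each carrying a Gagliardo--Nirenberg factor and a factor polynomial in $p_k$---converges, and that its limit collapses to the closed form $\tilde C\,(1+2D_c\mu/C^2)^2\,(1+\sqrt{C^2+2\mu D_c}/D_c)^{2d}$; pinning down the exponents $2$ and $2d$ hinges on evaluating the geometric sums $\sum_k p_k^{-1}$ and $\sum_k k\,p_k^{-1}$ and on how $\theta=d/(d+2)$ propagates through the recursion. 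The $L^1$ step and the comparison for $u$ are, by contrast, routine.
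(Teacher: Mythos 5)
Your proposal follows essentially the same route as the paper's proof: testing with $c^{p-1}$, absorbing the saturated flux (bounded by $C$) into the diffusive dissipation via Young's inequality, a Ladyzhenskaya/Gagliardo--Nirenberg interpolation anchored at the $L^1$ norm, a Moser--Alikakos iteration along $p_k=2^k$ whose constants collapse into $\tilde{\tilde C}$, the $L^1$-in-time bound from $c(1-c)\le\frac14$, and finally an ODE comparison (maximum principle) for $u$. The only differences are cosmetic (you establish the $L^1$ bound first, and you invoke Gagliardo--Nirenberg directly where the paper uses its $\eps$-weighted form), so the argument is correct and matches the paper's.
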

\begin{proof}
We follow \cite{kurganovnumerical} and start by multiplying the first equation of \eqref{small_system_reg} by $c^{s-1}$ for $s\geq2$. Integration by parts, the chain rule, the bound on $Q$ as well as Young's inequality yield,
 \begin{align*}
\frac 1s \frac{d}{dt} \int_\Omega c^s\,dx &= -D_c \int_\Omega \nabla c \cdot \nabla(c^{s-1})\,dx +  \int_\Omega c Q(\chi \nabla u) \cdot \nabla(c^{s-1})\,dx +\mu \int_\Omega c^s(1-c)\, dx \\
 &\leq - \frac{4D_c(s-1)}{s^2}\int_\Omega|\nabla(c^{\frac s 2})|^2 \,dx  + \frac{2C(s-1)}{s}\int_\Omega c^{\frac s2} |\nabla(c^{\frac s 2})| \,dx + \mu \int_\Omega c^s\, dx \\
 & \leq - \frac{4D_c(s-1)}{s^2} \|\nabla(c^{\frac s 2}) \|_{L^2(\Omega)}^2 + \frac{2C(1-s)}{s} \int_\Omega \frac 12 \left(\frac{2D_c}{Cs} |\nabla(c^{\frac s 2})| + \frac{Cs}{2D_c} c^s\right)\, dx + \mu \int_\Omega c^s\, dx \\
 &\leq  - \frac{2D_c(s-1)}{s^2} \|\nabla(c^{\frac s 2}) \|_{L^2(\Omega)}^2 + \frac{C^2(s-1)+2D_c\mu}{2D_c}\int_\Omega c^s\, dx.
 \end{align*}
The last integral is estimated using the interpolation inequality,
\begin{equation}
 \|\omega\|^2_{L^2(\Omega)} \leq \eps \|\nabla \omega\|^2_{L^2(\Omega)} + K(1+\eps^{-\frac d2}) \| \omega\|^2_{L^1(\Omega)},
\end{equation}
for a constant $K$ depending only on the domain $\Omega$ and a chosen $\eps$ \cite{ladyzhenskaeiia1968linear}. We choose $\eps$ such that
\[ \frac{C^2(s-1)+2D_c\mu}{2D_c} = \frac{2D_c(s-1)}{s^2\eps}- \frac{C^2(s-1)+2D_c\mu}{2D_c} \Rightarrow \eps = \frac{2D_c^2(s-1)}{C^2s^2(s-1)+2D_c\mu s^2}>0,\] and thus get
\begin{align*}
 \frac{C^2(s-1)+2D_c\mu}{2D_c}\int_\Omega c^s\, dx &= \left( \frac{2D_c(s-1)}{s^2\eps}- \frac{C^2(s-1)+2D_c\mu}{2D_c}\right) \int_\Omega c^s\, dx \\
 &\leq \frac{2D_c(s-1)}{s^2} \|\nabla(c^{\frac s 2}) \|_{L^2(\Omega)}^2 + \frac{2D_c(s-1)K(1+\eps^{-\frac d2})}{s^2\eps} \| c^{\frac s 2}\|^2_{L^1(\Omega)} \\ &- \frac{C^2(s-1)+2D_c\mu}{2D_c} \int_\Omega c^s\, dx.
\end{align*}
Hence the dependence on $\nabla(c^{\frac s 2})$ in the above estimates vanishes and using $\eps^{-\frac 12}\leq s\sqrt{C^2+2\mu D_c}/D_c$, we get
\[ \frac{d}{dt}\int_\Omega c^s\,dx   + \frac{sC^2(s-1)+2sD_c\mu}{2D_c} \int_\Omega c^s\, dx\leq \frac{Ks\left(1+\left( \frac{s\sqrt{C^2+2\mu D_c}}{D_c}\right)^d \right)(C^2(s-1)+2D_c\mu)}{D_c}
\left( \int_\Omega c^{\frac s2} \, dx\right)^2.\]

Next, we multiply by the integrating factor $e^{\kappa t},~\kappa = (sC^2(s-1)+2sD_c\mu)/2D_c$, and obtain after integrating over $[0,t],~0<t\leq T$ and eliminating the integrating factor
\begin{equation}
 \int_\Omega c^s(x,t)\,dx \leq  \int_\Omega c_0^s \, dx + 2K\left( 1 + \frac{2D_c \mu}{C^2(s-1)}\right)\left( 1 + \frac{s\sqrt{C^2+2\mu D_c}}{D_c}\right)^d \sup_{0\leq t \leq T} \left( \int_\Omega c^{\frac s2} \, dx\right)^2
\end{equation}
Now, the function
\[M(s) = \max\left\{\|c_0\|_{L^\infty(\Omega)},~\sup_{0\leq t \leq T}\left( \int_\Omega c^{\frac s2} \, dx\right)^{\frac 1s} \right\}\]
satisfies the inequality
\[M(s)\leq\left(\tilde K \left( 1 + \frac{2D_c \mu}{C^2(s-1)}\right)\left( 1 + \frac{s\sqrt{C^2+2\mu D_c}}{D_c}\right)^d \right)^{\frac 1s}M(s/2).\]
Choosing the sequence $s=2^k,~k\in\mathbb{N}$, and dissolving the recursion by estimating the value of the monotonously increasing infinite product,
\[\prod_{k=1}^\infty \left( 1 + 2^k C \right)^{\frac{d}{2^k}}\leq (2 + 2C)^{2d},\]we get
\[M(2^k)\leq  \tilde{C}\left( 1 + \frac{2D_c \mu}{C^2}\right)^{2} \left( 1 + \frac{\sqrt{C^2+2\mu D_c}}{D_c}\right)^{2d}M(1),\]
where $\tilde{C}=2 \max\{\tilde K,~\tilde K ^2\}$ depends on $\Omega$ only.
Taking the limit $k\rightarrow \infty$, we end up with
\begin{equation}\label{linftyc}
 \|c(\cdot,t)\|_{L^{\infty}(\Omega)}\leq \tilde C \left( 1 + \frac{2D_c \mu}{C^2}\right)^{2} \left( 1 + \frac{\sqrt{C^2+2\mu D_c}}{D_c}\right)^{2d}M(1) = \tilde{\tilde C} M(1).
\end{equation}
We further consider the mass of the component c by integrating the the first equation of \eqref{small_system_reg} and obtain applying Gauss's theorem and using the non-negativity of $c$,
$$\frac{d}{dt}\int_\Omega c\,dx =\mu\int_\Omega c(1-c) \,dx\leq \mu \int_\Omega \frac 14 \,dx = \frac{\mu |\Omega|}{4},$$
since $c(1-c)\leq 1/4$ for any $c\geq 0$. Therefore we have
$$M(1) = \max \left\{ \| c_o\|_{L^\infty(\Omega)}, ~ \|c_0\|_{L^1(\Omega)}+ \frac{T\,\mu |\Omega|}{4}\right\}.$$
Together with \eqref{linftyc} the estimation for $c$ follows.

Because of the maximum principle of heat equation, the solution of the following initial value problem,
$$\left\{
 \begin{aligned}
  \frac{d\omega}{dt} &= -\beta \omega + \alpha \tilde{\tilde C} M(1),\\
  \omega(0) &= \|u_0\|_{L^\infty(\Omega)},
 \end{aligned}
\right.$$
is an upper bound for $u$. Hence we can estimate
\begin{align*}
 0\leq u(x,t) \leq \omega(t) = e^{-\beta t}\|u_0\|_{L^\infty(\Omega)} + (1-e^{-\beta t})\frac{\alpha \tilde{\tilde C} }{\beta} M(1) \leq e^{-\beta t}\|u_0\|_{L^\infty(\Omega)} + \frac{\alpha \tilde{\tilde C}}{\beta} M(1),
\end{align*}
which proves the $L^\infty(\Omega)$ bound of the density $u$.
\end{proof}
\begin{remark}
Existence, uniqueness and non-negativity of a classical solution for system \eqref{small_system} with homogeneous Neumann boundary conditions and non-negative initial data $c_0,~u_0\geq 0$ was shown in \cite{wrzosek2004global, tao2009global}. Hence, for a sufficiently large $S$ in \eqref{Q_func} and non-negative initial conditions we can assume that the conditions of Theorem \ref{Linfty_estimates} hold.
\end{remark}

\begin{remark}
	Additionally, in \cite{Czochra.2010}, the authors studied another simplified cancer invasion model that includes also the degradation of the ECM. They employ nonlinear change of variables and comparison principles to prove existence, uniqueness, positivity, and boundedness of the solutions.
\end{remark}

\section{Numerical Methods}\label{methods}
We perform numerical simulations on a computational domain $\Omega$, which is either an interval in 1D or a rectangular domain in 2D subdivided into a finite number of computational non overlapping cells
\begin{align*}\Omega = \bigcup_{i=1}^N C_i .\end{align*} 
In the one-dimensional case on an interval domain $\Omega=(a,b)$, the cell interfaces are given by
$$ a = x_{1/2},\quad x_{i+1/2}= x_{i-1/2}+h_{i},\quad x_N = b,$$
for given cell sizes $h_i>0$ satisfying $\sum_{i=1}^N h_i = b-a$. Thus cell centers and cells are defined by
$$ x_i = \frac{x_{i+1/2}-x_{i-1/2}}{2},\quad C_i=\left\{ x_i + \lambda h_i,~\lambda \in \left[-\frac12, \frac12\right) \right\} \quad  i=1,\dots,N.$$
We will be employing mesh adaptation and although our grid refinement techniques can be adapted to general two dimensional meshes, we only consider rectangular domains $\Omega=(a,b)\times (a,b)$ with uniform quadrilateral cells in this work.
Consequently we employ constant grid sizes
$$h=(h^{(1)},h^{(2)})^T,~ h^{(1)}=\frac{b-a}{L},~h^{(2)}=\frac{b-a}{M},\quad N = L\, M, \quad L,M\in\mathbb{N}.$$
This way, cell centers are given by
\begin{align*}
	x_{1,1} &= \mathbf{e}_1(a+\frac{h^{(1)}}{2}) + \mathbf{e}_2(a+\frac{h^{(2)}}{2}),\\
	x_{i,j} &= x_{1,1} + \mathbf{e}_1(i-1)h^{(1)} + \mathbf{e}_2(j-1)h^{(2)},\quad &i=1,\dots,L,~j=1,\dots,M,
\end{align*}
where $\mathbf e_1, \mathbf e_2$ are the unit vectors in the spatial directions $x_1$ and $x_2$, respectively. Computational cells for $i=1,\dots,L,~j=1,\dots,M$ are given by
$$ C_{i,j}=\left\{ x_{i,j} + \Matrix{\lambda_1\, h^{(1)} \\\lambda_2\, h^{(2)}},~\lambda_1,\lambda_2 \in \left[-\frac12, \frac12\right) \right\}.$$
Moreover, we introduce a single index notation, for cells and cell centers using lexicographical indexing, i.e.
$$C_{i,j} \longrightarrow C_{i+(j-1)L},\quad x_{i,j} \longrightarrow x_{i+(j-1)L}, \quad i=1,\dots,L,\quad j=1,\dots,M,$$
and inversely,
$$C_k \longrightarrow C_{k-[\frac{k-1}{L}] L ,[\frac{k-1}{L}] +1},\quad x_{k} \longrightarrow x_{k-[\frac{k-1}{L}]L ,[\frac{k-1}{L}] +1}, \quad k=1,\dots,LM,$$
where $[~]$ is the Gauss floor function.

Let moreover $C_{k\pm \mathbf e_l}$ denote the neighbouring cell of $C_k$ in positive(negative) $\mathbf e_l$ direction ($l=1,2$), i.e., for $k=1,\dots ML$ we define

\begin{align*}
	C_{k \pm \mathbf e_1}= & C_{k-[\frac{k-1}{L}] L \pm  1,[\frac{k-1}{L}] +1}, \quad \text{where }k\neq 0,1\, \mod L,\text{ respectively }(\pm),\\
	C_{k \pm \mathbf e_2}= & C_{k-[\frac{k-1}{L}] L ,[\frac{k-1}{L}] +1 \pm 1}, \quad \text{for }k\leq  L(M-1),\ k\geq  L+1, \text{respectively }(\pm).\\
\end{align*}
									
\paragraph{Space discretization.} We discretize the system \eqref{chaplolsystem} with a finite volume method, and approximate a solution with piecewise constant functions
\begin{equation}\label{approx}
	\w_i(t)\approx \frac{1}{\operatorname{vol}(C_i)}\int_{C_i} \w(x,t)~dx,
\end{equation}
on every cell. In the following we present discretization of the advection, reaction, and diffusion operators,
\begin{align*}
	\mathcal{D}_i(\w_h(t)) &\approx \frac{1}{\operatorname{vol}(C_i)}\int_{C_i} D(\w(x,t))~dx,
 							&\mathcal{A}_i(\w_h(t)) &\approx \frac{-1}{\operatorname{vol}(C_i)}\int_{C_i} A(\w(x,t))~dx, \\
	\mathcal{R}_i(\w_h(t)) &\approx \frac{1}{\operatorname{vol}(C_i)}\int_{C_i} R(\w(x,t))~dx, &&
\end{align*}
where $\w_h(\cdot)=\{\w_i(\cdot)\}_{i=1}^N.$
We discretize the reaction term by evaluating the reaction operator
$$\mathcal{R}_i(\w_h(t)) = R(\w_i(t)).$$
Concerning diffusion in dimension 1, we use second order three-point central differences when the grid is uniform and second order five point central differences when the grid is non-uniform with $h_i=|C_i|$. In the latter case the discretization of the diffusion operator reads
\begin{equation}\label{eq:dscrDiff}
	\mathcal{D}_i(\w_h(t)) = D \left( \alpha_i^{(-2)} \w_{i-2}(t) +\alpha_i^{(-1)} \w_{i-1}(t) + \alpha_i^{(0)} \w_i(t) + \alpha_i^{(+1)} \w_{i+1}(t) + \alpha_i^{(+2)} \w_{i+2}(t) \right),
\end{equation}
where $D$ is a diagonal matrix with the vector $\Vector{ D_c & 0 &D_u &D_p &D_m}$ on the diagonal, and the coefficients $\alpha_i^{(-2)}, \dots, \alpha_i^{(+2)}$, $\sigma_i$, are chosen such that we get a second order approximation of the second derivative, i.e.
\begin{align*}
\alpha_i^{(-2)} 	=& - 8\frac{(h_{i-1}-h_{i+1})}{(h_{i-2}+2h_{i-1}+2h_i + 2 h_{i+1} + h_{i+2}) \sigma_i},\\
\alpha^{(-1)}_i 	=& 8\frac{h_{i-1}(4h_{i-1}+4h_{i-2}+2h_i-4h_{i+1}-2h_{i+2})+3h_{i+1}^2}{(h_i + h_{i-1})(h_{i-1}+2h_i+h_{i+1})\sigma_i}\\
					 &+ 8\frac{h_{i+2}^2+4h_{i+1}h_{i+2}+h_i h_{i+2}+h_{i-2}(h_{i-2}-2h_{i+1}-h_{i+2}+h_i)}{(h_i + h_{i-1})(h_{i-1}+2h_i+h_{i+1})\sigma_i},\\
\alpha^{(+1)}_i 	=& 8\frac{h_{i+1}(4h_{i+1}+4h_{i+2}+2h_i-4h_{i-1}-2h_{i-2})+3h_{i-1}^2}{(h_i + h_{i+1})(h_{i-1}+2h_i+h_{i+1})\sigma_i}\\
					 &+ 8\frac{h_{i-2}^2+4h_{i-1}h_{i-2}+h_i h_{i-2}+h_{i+2}(h_{i+2}-2h_{i-1}-h_{i-2}+h_i)}{(h_i + h_{i+1})(h_{i-1}+2h_i+h_{i+1})\sigma_i},\\
\alpha_i^{(0)}		=&-(\alpha^{(-1)}_i+\alpha^{(+1)}_i), \\
\alpha_i^{(+2)}		=&-\alpha_i^{(-2)}, \\
\sigma_i 			=& h_{i-2}^2 + h_{i+2}^2 + 2 (h_{i-1}^2 +h_{i+1}^2) + 3 (h_{i-1}h_{i-2} + h_{i+1}h_{i+2})\\
					 &+ h_i( h_{i+1} + h_{i-1} + h_{i+2} + h_{i-2})- h_{i-2}(h_{i+1} + h_{i+2}) - h_{i-1}(h_{i+1} + h_{i+2}),
\end{align*}
The description \eqref{eq:dscrDiff} reduces to the common three-point central differences if the grid is uniform around $C_i$.


In this work we discretize the advection terms following the guidelines of \cite{kurganovnumerical}. The discrete advection operator in the conservative formulation reads
\begin{equation}
	\mathcal{A}_i(\w_h(t)) = -\sum_{j=1}^d \frac{1}{h_i}\Vector{\mathcal{H}_{i+\mathbf e_j/2}(\w_h(t))-\mathcal{H}_{i-\mathbf e_j/2}(\w_h(t))
	& 0 & 0 & 0 & 0}^T,
\end{equation}
where $d\in\{1,~2\}$ is the dimension of the domain. The numerical fluxes $\mathcal{H}_{i+\mathbf e_j/2}$, cf. \eqref{numFlux}, are used to approximate the chemotaxis fluxes between the cells $C_i$ and $C_{i+\mathbf e_j}$. They are given by products of approximated characteristic velocities $\mathcal{P}_{i+\e_j/2}$ and suitable approximations of gradients of $u,~v$ and $p$, denoted here by $s_{i}^{(j)}$. To describe $\mathcal{H}_{i+\mathbf e_j/2}$ we need to define $\mathcal{P}_{i+\mathbf e_j/2}$ and $s_{i}^{(j)}$. For the approximation of characteristic velocities on the cell interfaces we calculate approximations of the form
$$\mathcal{P}_{i+\mathbf e_j/2}(\w_h(t))  = \chi_u L_{i+\mathbf e_j/2}(u_h(t)) + \chi_v L_{i+\mathbf e_j/2}(v_h(t)) + \chi_p L_{i+\mathbf e_j/2}(p_h(t)),$$
where $L^\text{diff}_{i+\mathbf e_j/2}$ represents central difference approximations of the first derivative. Since, 2nd order approximations cannot be obtained by a three point stencil on non-uniform grids, we apply a four point finite difference approximation centered around the interface, i.e.
$$L_{i+1/2}(u_h) = \beta^{(-1.5)}_{i+1/2} u_{i-1} + \beta^{(-0.5)}_{i+1/2} u_{i} + \beta^{(+0.5)}_{i+1/2} u_{i+1}+ \beta^{(+ 1.5)}_{i+1/2} u_{i+2},$$
where the coefficients are chosen such that we have a third order accurate approximation of the first derivative,
\begin{align*}
 \beta^{(-1.5)}_{i+1/2} &= \frac{h_{i+1}(6 h_{i}-4h_{i+1}-2h_{i+2}) + 2h_{i} h_{i+2}}{(h_i+h_{i-1})(h_{i-1}+2h_i + h_{i+1})(h_{i-1}+2 h_i + 2h_{i+1} + h_{i+2})}, & &\\
 \beta^{(-0.5)}_{i+1/2} &= - \frac{h_{i+1}(12h_i + 6 h_{i-1} -2 h_{i+2}-4h_{i+1})+h_{i+2}(2h_{i-1}+4h_i)}{(h_{i-1} + h_i)(h_i+h_{i+1})(h_i + 2 h_{i+1}+ h_{i+2})}, & &\\
 \beta^{(+0.5)}_{i+1/2}&= \frac{h_{i}(12h_{i+1} + 6 h_{i+2} -2 h_{i-1}-4h_{i})+h_{i-1}(2h_{i+2}+4h_{i+1})}{(h_{i+1} + h_{i+2})(h_i+h_{i+1})(h_{i-1} + 2 h_{i}+ h_{i+1})}, & \\
 \beta^{(+1.5)}_{i+1/2} &= - \frac{h_i(6 h_{i+1} - 4h_i- 2h_{i-1}) + 2h_{i+1}h_{i-1}}{(h_{i+1}+h_{i+2})(h_{i}+2h_{i+1} + h_{i+2})(h_{i-1}+2 h_i + 2h_{i+1} + h_{i+2})}. &&
\end{align*}

In order to approximate the gradient of $c$, we apply the MC limiter \cite{van1977towards}. On the uniform grid it reads
$$s_{i}^{(j)}(c_h)=\minmod \left( 2\frac{c_i-c_{i-\mathbf e_j}}{h}, ~ \frac{c_{i+\mathbf e_j}-c_{i-\mathbf e_j}}{2h}, ~ 2\frac{c_{i+\mathbf e_j}-c_i}{h} \right).$$

In the case of non-unform grids in one dimension we obtain
$$s_{i}(c_h) = \minmod \left( 4 \frac{c_i-c_{i-1}}{\kappa_{i-1}},~   \frac{-2 \kappa_i c_{i-1}}{\kappa_{i-1}(\kappa_{i-1}+\kappa_i)}  +
\frac{2(\kappa_i^2-\kappa_{i-1}^2)c_i}{\kappa_{i-1}\kappa_i(\kappa_{i-1}+\kappa_i)}  +
\frac{2\kappa_{i-1}c_{i+1}}{\kappa_i(\kappa_{i-1}+\kappa_i)} ,~4\frac{c_{i+1}-c_i}{\kappa_i} \right),$$
where $\kappa_i = h_i + h_{i+1}$. The minmod operator is given by
\begin{equation}\label{minmod}
	\operatorname{minmod}(v_1,\dots,v_n)=
	\begin{dcases}
		\max\{v_1,\dots,v_n\},	&\textnormal{if}~ v_k<0,~k=1,\dots,n,\\
		\min\{v_1,\dots,v_n\},	&\textnormal{if}~ v_k>0,~k=1,\dots,n, \\
		0,						&\textnormal{otherwise}.
	\end{dcases}
\end{equation}
The numerical fluxes are based on the upwinding approach and are given as follows
\begin{equation} \label{numFlux}
	\mathcal{H}_{i+\e_j/2}(\w_h)=
	\begin{dcases}
		\mathcal{P}_{i+\e_j/2}(\w_h) \left( c_i +\frac{h_i^{(j)}}{2}s_{i}^{(j)}(c_h) \right),			&\textnormal{if }\mathcal{P}_{i+\e_j/2}(c_h)\geq 0, \\
		\mathcal{P}_{i+\e_j/2}(\w_h) \left( c_{i+\e_j} - \frac{h_{i+1}^{(j)}}{2}s_{i+\e_j}^{(j)}(c_h) \right),	&\textnormal{if }\mathcal{P}_{i+\e_j/2}(w_h)< 0.
	\end{dcases}
\end{equation}
After space discretization, we end up with the following system of ordinary differential equations
\begin{equation}\label{approx_ODE}
	\partial_t \w_h -\mathcal{A}(\w_h)= \mathcal{R}(\w_h) + \mathcal{D}(\w_h).
\end{equation}

\paragraph{Time discretization.}
The numerical approximations of the solution of \eqref{approx_ODE} at discrete points in time $t_n$ wil be denoted 
$\w_h^n$. 

The discretization of the taxis-terms dictates a limit on the time step $\tau_n=t_{n+1}-t_n$ according to the CFL condition
\begin{equation}\label{CFL}
	\max_{i, j}~ \tau_n \frac{\mathcal{P}_{i+\mathbf e_j}(\w_h^n}{h_i} = CFL \leq 1.
\end{equation}
We have compared several numerical methods to approximate \eqref{approx_ODE}. They are shortly described in the following part.

\begin{description}
\item[\textnormal{\textit{EXPLICIT:}}]
	The first order forward Euler time integration
	$$\w_h^{n+1} = \w_h^n + \tau_n (\mathcal{A}(\w_h^n) + \mathcal{R}(\w_h^n) + \mathcal{D}(\w_h^n)).$$
	This is the only explicit method, we apply. It requires small time steps for stability reasons due to the explicit discretization of diffusion. Indeed, $\tau_n = \mathcal{O}(\max_i h_i^2)$.

\item[\textnormal{\textit{CND:}}]
	A Crank-Nicolson type method with
	$$\w_h^{n+1} - \frac{\tau_n}{2} \mathcal{D}(\w_h^{n+1}) = \w_h^n + \tau_n \left(  \frac12 \mathcal{D}(\w_h^n)  + \mathcal{R}(\w_h^n) + \mathcal{A}(\w_h^n) \right).$$
	Since we approximate the diffusion term implicitly, we can choose time steps according to the CFL condition \eqref{CFL} alone.
	
\item[\textnormal{\textit{ROSENBROCK:}}]
	A general $s$-stage linearly implicit method or Rosenbrock method takes the form:
	\begin{equation}\label{limplicit}\left\{
		\begin{aligned}
			\w_h^{n+1} &= \w^n_h + \tau_{n} \sum_{j=1}^s b_j k_j, \\
			(Id - a_{j,j}\tau_{n}~J)k_j &= g(\w^n_h + \tau_{n}~\sum_{\nu=1}^{j-1}(a_{j,\nu}+\gamma_{j,\nu})k_\nu)-\tau_n J \sum_{\nu=1}^{j-1}\gamma_{j,\nu}k_\nu, ~~j=1,\dots,s.
	 \end{aligned}
	\right.
	\end{equation}
	for given lower triangular matrices $A=(a_{i,j})_{i,j=1}^s,~\Gamma=(\gamma_{i,j})_{i,j=1}^s$, a vector $b$ and
	$$g(\w_h)=\mathcal{A}(\w_h^n) + \mathcal{R}(\w_h^n) + \mathcal{D}(\w_h^n).$$
	If $A,~b$ satisfy specific algebraic conditions, then high order of consistency can be reached with these methods. Stability properties can be achieved by selecting reasonable values for the parameter matrix $\Gamma$ and a suitable approximation $J$ of $\frac{\partial}{\partial \w_h}(g(\w_h)$  \cite{gerisch2006robust}. We choose $J = \frac{d}{d\w_h}( \mathcal{R}(\w_h^n) + \mathcal{D}(\w_h^n))$ since we assume the taxis discretization to be stable in explicit methods.
	
\item[\textnormal{\textit{ROS2:}}]
	An L-stable, second order consistent two stage Rosenbrock method ($s=2$), which has already been used for applications in reaction-diffusion-taxis systems in \cite{gerisch2002operator}. It is given by the coefficients
	\begin{equation} \label{ROS2_coeffs}
		A = \begin{pmatrix}1- \frac{\sqrt2}{2} & 0  \\ \sqrt2-1 & 1-\frac{\sqrt2}{2} \end{pmatrix},~
			\Gamma = \begin{pmatrix}0 & 0  \\ 2-\sqrt2 & 0 \end{pmatrix},~
		b = \begin{pmatrix}
		\frac{1}{2} & \frac{1}{2}
     \end{pmatrix}^T .
	\end{equation}
	
\item[\textnormal{\textit{ROS3:}}]	
	An L-stable \cite{hundsdorfer2003numerical} third order Rosenbrock method ($s=3$) with coefficients
	\begin{align*}
	  A = \begin{pmatrix}a & 0 & 0 \\ 0.5 & a & 0 \\
						0.5 & 0.5 & a \end{pmatrix},
		&~~\gamma_{2,1} = - (3a + \gamma_{3,1}+\gamma_{3_2}),\\
	a = 1-\frac{1}{2}\sqrt2 \cos(\theta)+\frac12 \sqrt6 \sin(\theta),
		&~~\gamma_{3,2} = \frac12 - 3a,\\
	b = \begin{pmatrix}
		\frac{1}{3} & \frac{1}{3} & \frac{1}{3}
   		 \end{pmatrix}^T,		
		&~~ \gamma_{3,1} = -\frac{1}{1+2\gamma_{3,2}}\left( 6a^3-12a^2 + 6(1+\gamma_{3,2})a + 2\gamma_{3,2}^2 - \frac{1}{2}\right),\\
	 \theta = \frac{1}{3}\arctan\left(\frac{\sqrt2}{4}\right).
	\end{align*}

\item[\textnormal{\textit{ROS3-ATC:}}]
	A ROS3 method with adaptive time step control.

\item[\textnormal{\textit{STRANG:}}]
	A second order splitting approach. Let $\Xi_\mathcal{F}(\tau) y$ be a numerically approximated solution of the initial value problem
	\[ \partial_t \w_h = \mathcal{F}(\w_h), \quad \w_h(0) = y, \label{fracInitial} \]
	at $t = \tau$, for any operator introduced before,
	$\mathcal{F}\in\{ \mathcal{D}, ~\mathcal{R},~\mathcal{A}\}.$
	The Strang-splitting method yields
	\begin{equation}\label{TDRDT}
		\w_h^{n+1} = \Xi_\mathcal{T}\left(\frac{\tau_n}{2}\right)\,\Xi_\mathcal{D}\left(\frac{\tau_n}{2}\right)\,\Xi_\mathcal{R}(\tau_n)\,
		\Xi_\mathcal{D}\left(\frac{\tau_n}{2}\right)\,\Xi_\mathcal{T}\left(\frac{\tau_n}{2}\right)\,\w_h^n.
	 \end{equation}
	This method is known to be second order in time, if the methods used to compute $\Xi_\mathcal{F}(\tau_n)$, are at least second order. We choose the fourth order Runge-Kutta method \cite{hairer1991solving} for the reaction- and taxis-step and the TR-BDF2 method (modified trapezoidal rule with the second order backward differential formula) for the diffusion-terms. The TR-BDF2 can be written as an imlicit Runge-Kutta method, coefficients can be found in Table \ref{TR-BDF2}.
	\begin{table}
	\centering
	\renewcommand{\arraystretch}{1.2}
	  \begin{tabular}{>{$} c <{$}|>{$} c <{$} >{$} c <{$} >{$} c <{$}}
	0 & 0 & & \\ \frac{1}{2} & \frac{1}{4}& \frac{1}{4} &\\
	1 & \frac{1}{3} & \frac{1}{3}& \frac{1}{3} \\
	\hline 1 & \frac{1}{3} & \frac{1}{3}& \frac{1}{3}
	\end{tabular}
	\caption{Butcher tableau for the simplified TR-BDF2 method.}
	\label{TR-BDF2}
	\end{table}

\item[\textnormal{\textit{STRANG-CND and STRANG-IR:}}]
	Two variants of the \textit{STRANG} method: \textit{STRANG-CND} uses the Crank-Nicolson method for the diffusion term instead of TR-BDF2 and \textit{STRANG-IR} employs the linearly implicit second order method \textit{ROS2} for the reaction term instead of the explicit Runge-Kutta method.

\item[\textnormal{\textit{IMEX2 and IMEX3:}}]
	Second and third order implicit-explicit methods following \cite{pareschi2005implicit}. We consider a splitting of the ordinary differential equation into an implicit part $\mathcal{I}$ and an explicit part $\mathcal{E}$,
	$$g(\w_h)=\mathcal{A}(\w_h^n) + \mathcal{R}(\w_h^n) + \mathcal{D}(\w_h^n) = \mathcal{I}(\w_h) + \mathcal{E}(\w_h),$$
	where $\mathcal I= \mathcal D$ and $\mathcal E =\mathcal A+R$, and apply an explicit Runge-Kutta method for the explicit part and a coupled diagonally implicit Runge-Kutta-method to the implicit part resulting in an implicit-explicit (IMEX) scheme. A general $s$-stage scheme reads
	\begin{equation}\label{IMEX_RK}
		\begin{dcases}
		\textbf E_j 	= \mathcal{E}(\textbf W_j), 																			& i=1,\dots,s,\\
		\textbf I_j		= \mathcal{I}(\textbf W_j	),																			& i=1,\dots,s,\\
		\textbf W_i^* 	= \w_h^n + \tau_{n} \sum_{j=1}^{i-2}\bar a_{i,j}\textbf E_j + \tau_{n}~\bar a_{i,i-1}\textbf E_{i-1}, 	& i=1,\dots,s,\\
		\textbf W_i 	= \textbf W_i^* + \tau_{n}\sum_{j=1}^{i-1}a_{i,j} \textbf I_j + \tau_{n}a_{i,i}\textbf I_i,				& i=1,\dots,s,\\
		\w_h^{n+1}=\w^n_h + \tau_{n}~\sum_{i=1}^s \bar b_i \textbf E_i + \tau_{n}~\sum_{i=1}^s b_i \textbf I_i,
		\end{dcases}
	\end{equation}
	where the explicit scheme is given by $\bar b,~\bar A$ and the diagonally implicit scheme by $b,~A$.

	We concentrate on two particular schemes. The first one is the IMEX-Midpoint scheme. Though it only uses one implicit stage for the diffusion term, it is second order accurate in time \cite{pareschi2005implicit}. This makes it less expensive than the Strang-splitting. Further, we consider a third order scheme constructed in such a way that it  fulfills several stability conditions, e.g. A- and L- stability \cite{christopher2001additive}. Coefficients for both methods can be found in Tables \ref{IMEX2} and \ref{IMEX3}. 

	\begin{table}
	\hspace{3cm}
	\renewcommand{\arraystretch}{1.2}
	  \begin{tabular}{>{$} c <{$}|>{$} c <{$} >{$} c <{$}}
	0 & 0 & 0 \\ \frac{1}{2} & \frac{1}{2} & 0 \\ \hline & 0&1
	\end{tabular}\hspace{3cm}
	 \begin{tabular}{>{$} c <{$}|>{$} c <{$} >{$} c <{$}}
	0 & 0 & 0 \\ \frac{1}{2} & 0&\frac{1}{2} \\ \hline & 0&1
	\end{tabular}
	\caption{Tableau for the IMEX-Midpoint scheme (\textit{IMEX2}). Coefficients $\bar A,~\bar b, \bar c$ for the explicit scheme on the left and $A,~b,~c$ for the diagonally implicit scheme on the right, respectively.}
	\label{IMEX2}
	\end{table}
	
	\begin{table}
	\centering
	\renewcommand{\arraystretch}{1.2}
	 \begin{tabular}{c|c c c c}
	$0$ & & & &\\
	$\frac{1767732205903}{2027836641118}$ & $\frac{1767732205903}{2027836641118} $& & & \\
	$\frac{3}{5}$ & $\frac{5535828885825}{10492691773637}$ & $\frac{788022342437}{10882634858940} $& & \\
	$1$ & $\frac{6485989280629}{16251701735622}$ & $-\frac{4246266847089}{9704473918619} $& $\frac{10755448449292}{10357097424841}$ & \\ \hline
	 & $\frac{1471266399579}{7840856788654}$ & $-\frac{4482444167858}{7529755066697}$ & $\frac{11266239266428}{11593286722821}$ & $\frac{1767732205903}{4055673282236}$
	\end{tabular}
	\vspace{0.4cm}
	\renewcommand{\arraystretch}{1.2}
	 \begin{tabular}{c|c c c c}
	  0 & 0 & & & \\
	$\frac{1767732205903}{2027836641118}$ &$\frac{1767732205903}{4055673282236}$ & $\frac{1767732205903}{4055673282236}$ & & \\
	$\frac{3}{5}$ &$\frac{2746238789719}{10658868560708}$ & $-\frac{640167445237}{6845629431997}$ & $\frac{1767732205903}{4055673282236}$ & \\
	1 & $\frac{1471266399579}{7840856788654}$ & $-\frac{4482444167858}{7529755066697}$ & $\frac{11266239266428}{11593286722821} $& $\frac{1767732205903}{4055673282236}$\\
	\hline
	 & $\frac{1471266399579}{7840856788654}$ & $-\frac{4482444167858}{7529755066697}$ & $\frac{11266239266428}{11593286722821} $& $\frac{1767732205903}{4055673282236}$
	 \end{tabular}
	\caption{Tableaux for the explicit (first tableau) and the implicit part (second tableau) of the third order IMEX-Runge-Kutta scheme (\textit{IMEX3}).}
	\label{IMEX3}
	\end{table}
	
\item[\textnormal{\textit{IMEX3-ATC:}}]
	Third order IMEX method with adaptive time step control. This method is applied in two variants: \textit{IMEX3-ATC-UPWIND1} uses first order upwind fluxes ($s_{i}^{(j)}(c_h)=0$ in \eqref{numFlux}). \textit{IMEX3-ATC-IR} treats reaction terms implicitly and uses $\mathcal{I}=\mathcal{D}+\mathcal{R}$.
\end{description}

	\paragraph{Adaptivity in time.}
	 Adaptive time step control is done conventionally by employing an additional lower order scheme in order to calculate a local error estimate
	\begin{equation}
		\epsilon_n = \| \w_h^{n+1} -\w_h^{n+1,\text{ low}}\|_\infty.
	\end{equation}
	
	The approximation $w_h^{n+1}$ is accepted if
	\begin{equation}
	\epsilon_n< \epsilon_n^\text{tol} = \max\{10^{-6},~10^{-6}\,\|w_h^n\|_1\},
	\end{equation}
	otherwise the same time step is repeated for a smaller value of $\tau_n$ which we get by multiplying the old value by $0.9~\sqrt[\leftroot{0}\uproot{1}3]{\frac{e^\text{tol}_n}{e_n}}$.
	
	We consider adaptive time step control for the third order linearly implicit scheme and the third order IMEX-Runge-Kutta scheme. Both of them are third order methods and feature an embedded method of second order. Hence, lower order approximations $(\w_h^{n+1,\text{ low}})$ can be obtained without much additional computational costs. Weights $\beta$ which replace the regular weights $b$ in the lower order embedded schemes are given by
	$$\bar \beta = \beta =\begin{pmatrix} \frac{2756255671327}{12835298489170} & -\frac{10771552573575}{22201958757719} &
		\frac{9247589265047}{10645013368117} &  \frac{2193209047091}{5459859503100}\end{pmatrix}^T,$$
	for the IMEX3 scheme, and by
	$$ \beta = \begin{pmatrix} \frac12 &\frac12 & 0\end{pmatrix}^T,$$
	for the ROS3 scheme.

	\paragraph{Adaptivity in space.}
	We describe the adaptive mesh refinement we use in the case $d=1$. Mesh cells now depend on time as well
	$$\Omega =\bigcup_{i=1}^{N_n} C_i^n, \quad |C_i^n|=h_i^n.$$
	We consider the \textit{h-refinement} method for the refinement of the mesh and proceed as follows, see also \cite{kroner2000posteriori, ohlberger19999adaptive, puppo2012adaptive}. We prescribe a \textit{monitor function}, that depends on the numerical solution, and we evaluate it on every cell $i$, i.e. $M_i(\w_h^n)$. The monitor function will be compared against two properly chosen coarsening/refinement thresholds $C_\text{coa}<C_\text{ref}$. We also assume a uniform initial grid and set $L_i^n$ to be the level of refinement of the cell $i$ (zero for the initial grid). The refinement/ coarsening strategies read as follows
\begin{description}
	\item[\textnormal{\textit{Refinement.}}]
		If $M_i(\w_h^n)>C_\text{ref}$ the cell is marked for refinement. If the cell is of refinement level $k$ it is bisected into two isodynamous daughter cells of $k+1$ level of refinement. Then approximate values of the monitor function, using the mother cell and the neighbouring cells, are computed on every daughter cell. The refinement process is repeated $n_\text{ref}$ times.
	\item[\textnormal{\textit{Coarsening.}}]
		If $M_i(\w_h^n)<C_\text{coa}$ the cell is marked for coarsening, and if both daughter cells of the same mother cell are marked for coarsening, they merge into a single cell. The approximate value of the monitor function on the mother cell is computed by of the daughter cells' monitor function. The coarsening process is repeated $n_\text{coar}$ times.
\end{description}
We also prescribe a maximal refinement level $L_\text{max}$.

The monitor functions that we consider in this work are the following
\begin{enumerate}
	\item The discrete gradient of $c$:
	\begin{equation}\label{monitor:gradient}
 		M_i(\w_h) =\max\left\{ \left|2\frac{c_{i+1}-c_i}{h_{i+1}+h_i}\right|, \left|2\frac{c_i - c_{i-1}}{h_i + h_{i-1}}\right| \right\}.
	\end{equation}
	\item Local approximations of the discretization errors of characteristic velocities:
	\begin{equation}\label{monitor:charSpeeds}
		M_i(\w_h) = \max \left\{\left|\mathcal{P}_{i-1/2}-\mathcal{P}_{i+1/2}^\text{low}\right|,\  \left|\mathcal{P}_{i+1/2}-\mathcal{P}_{i+1/2}^\text{low}\right|\right\},
	\end{equation}
	where the lower order approximations of the characteristic velocities $\mathcal{P}_{i+1/2}^\text{low}$ are calculated using two point approximations of the first derivative.
\end{enumerate}

\section{Numerical Experiments}\label{experiments}
We present results of numerical simulations\footnote{All the numerical experiments were conducted using MATLAB.}and compare the performance of the methods we introduced previously, and demonstrate the capability of \textit{h-refinement}.

In order to illustrate the dynamics of the system \eqref{chaplolsystem}, and to compare different time integration techniques we present simulation results, for benchmark problems similar to those in \cite{andasari2011mathematical}. Therefore, we first consider the system \eqref{chaplolsystem} on a one-dimensional interval $\Omega$ together with homogeneous Neumann boundary conditions.

\subsection{Experiment I}
The parameters are chosen according to the parameter set $\mathcal{P}$ \eqref{params}. The following initial conditions are assumed:
\begin{equation}\label{exp1dinitial}
\left\{
\begin{aligned}
c_0(x) =~&\exp\left(\frac{-x^2}{\varepsilon}\right),\\
v_0(x) = ~&1- \frac 1 2 \exp\left(\frac{-x^2}{\varepsilon}\right),\\
u_0(x) = ~&\frac 1 2 \exp\left(\frac{-x^2}{\varepsilon}\right),\\
p_0(x) = ~&\frac {1}{20} \exp\left(\frac{-x^2}{\varepsilon}\right),\\
m_0(x) = ~&0,
\end{aligned}\right.\ , \qquad\qquad x\in(0,10),
\end{equation}
where $\varepsilon= 5 \cdot 10^{-3}$. The initial conditions can be interpreted as an accumulation of cancer cells $c$, which start their invasion from the left boundary of the domain. The extracellular matrix $v$ is mostly intact, except for the location of the cancer cell accumulation. Activation of plasmin $m$ has not taken place up to $t=0$, but urokinase $u$ and a smaller amount of urokinase inhibitor type 1, $p$, is already on the spot everywhere, where cancer cells are located.

Since we have observed a reliable grid convergence with the IMEX3 time integration method, we chose this method to compute reference solutions.

Figure \ref{exp1fig} shows the computed time evolution on the domain $\Omega=(0,10)$. We can see a cluster of cancer cells, which travels to the right side of the domain, degenerating the ECM. At areas, where vitronectin is already degenerated to a small amount, new clusters, which take the form of peaks in the cancer cell densities, emerge. This can already be seen at $t=25$. The number of these peaks as well as their heights in $c$ vary in time. Clusters not only emerge, but they also move and merge. After approximately nine days of biological time ($t=75$) almost half of the domain is invaded by the cancer cells and even when the entire domain is invaded, cancer cell density exhibits a  dynamically heterogeneous spatio-temporal behavior, which can be seen by comparison at $t=300$ and $t=500$. The enzymes of the uPA-system, which regulate the process of invasion, do not develop clusters and take densities between $0$ and $1$ all over the period $t\in[0,500]$. The inhibitor PAI-1 density stays smooth mostly, while uPA and plasmin develop spiky solutions, as they are more directly influenced by the cancer cells. Due to its accuracy and relatively low computational costs, we chose the IMEX3 method as a favorite method, to compute the solutions.
\begin{figure}
\centering
\subfigure[$t=5$]{
\includegraphics[width=0.47\textwidth]{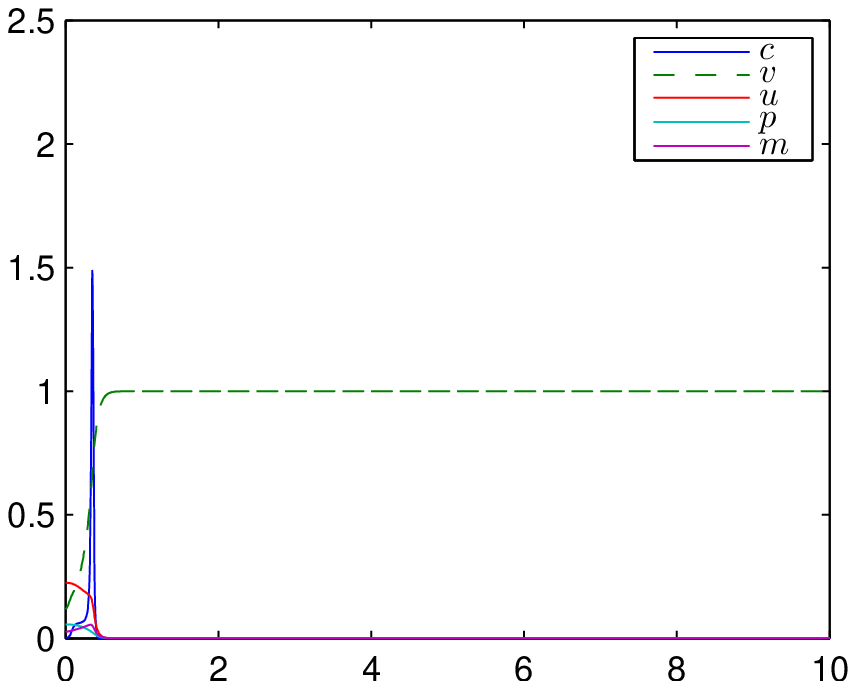}
}
\subfigure[$t=25$]{
\includegraphics[width=0.47\textwidth]{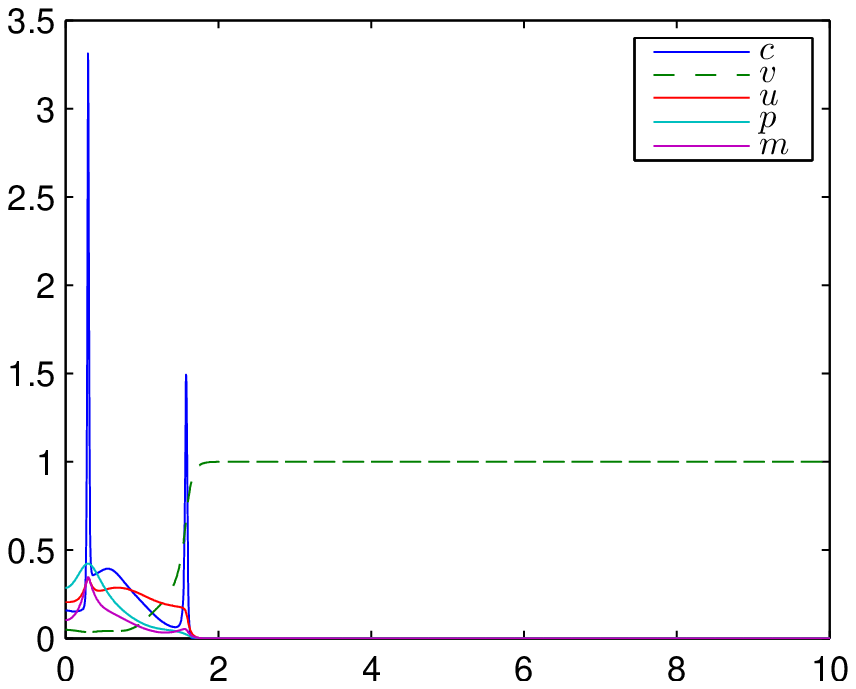}
}
\subfigure[$t=75$]{
\includegraphics[width=0.47\textwidth]{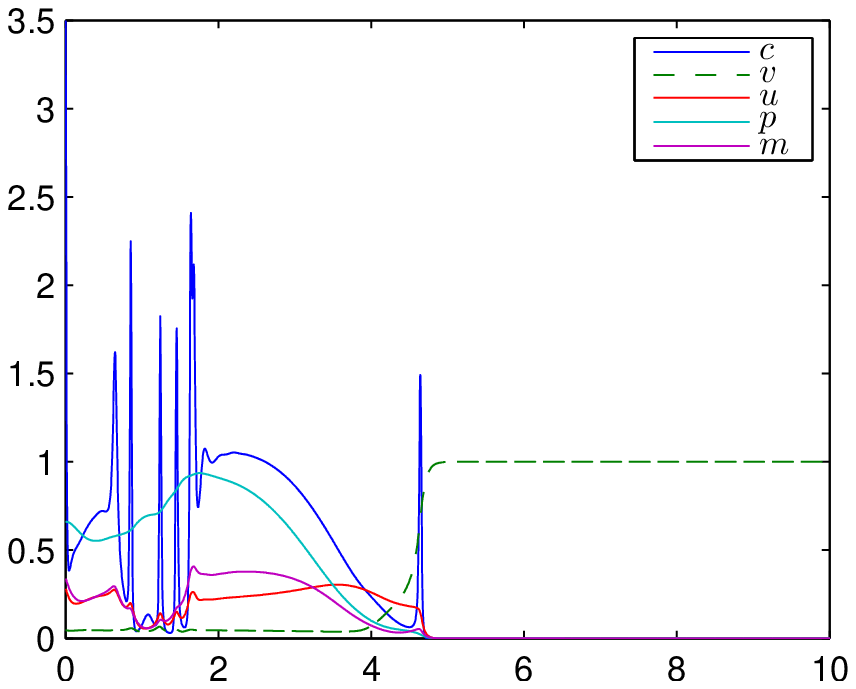}
}
\subfigure[$t=150$]{
\includegraphics[width=0.47\textwidth]{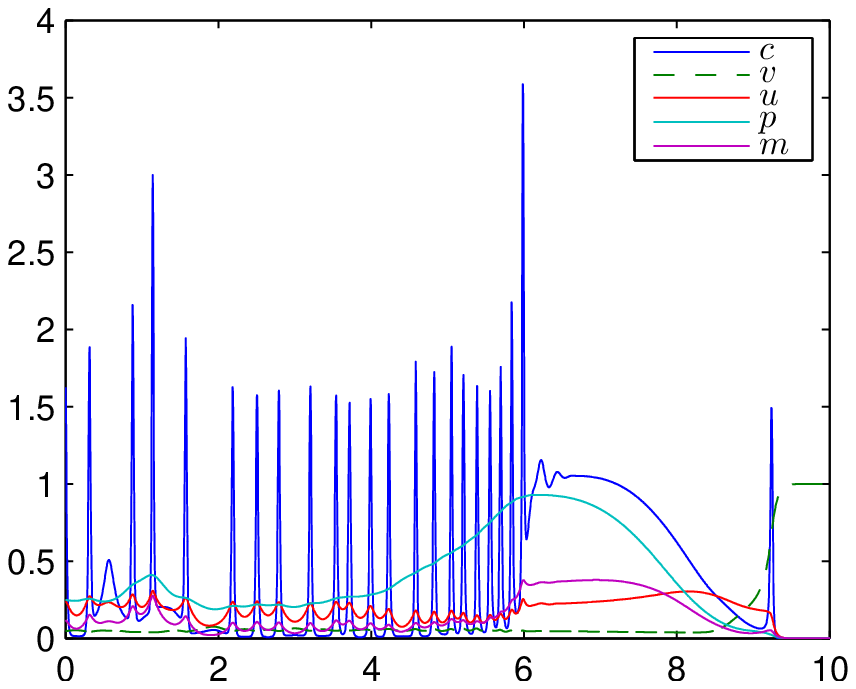}
}
\subfigure[$t=300$]{
\includegraphics[width=0.47\textwidth]{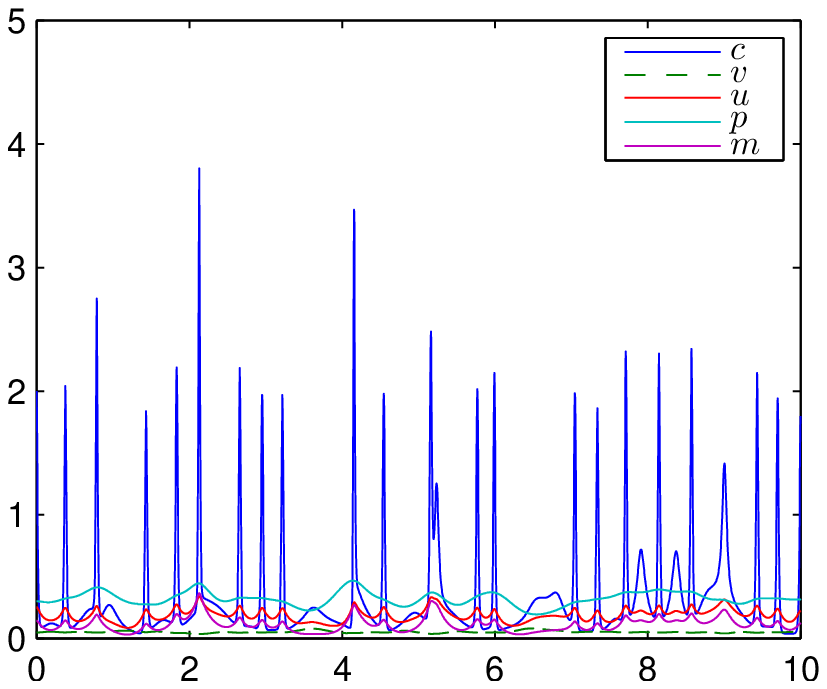}
}
\subfigure[$t=500$]{
\includegraphics[width=0.47\textwidth]{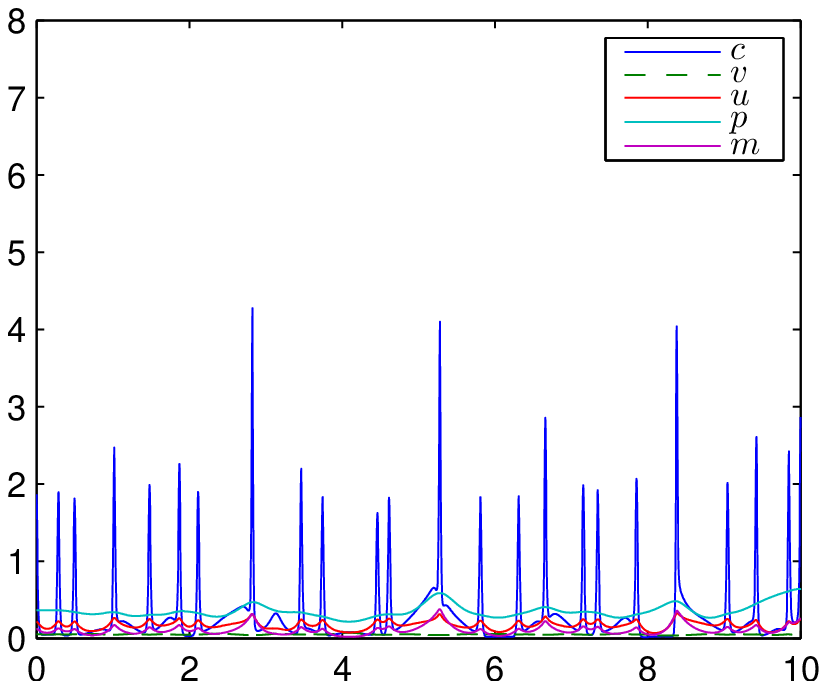}
}
\caption{Results of experiment I with parameterset $\mathcal{P}$ with $D_c=3.5~10^{-4}$.}
\label{exp1fig}
\end{figure}

In order to compare different time integration methods, we consider the narrowed domain $(0,5)$ and the final time $T=60$. We will study how the methods, described in the previous section, perform in this case. For comparison reasons, we consider a fixed Courant number of CFL=0.49 for every method. The influence of different Courant numbers is studied later, cf. Figure \ref{difference_CFL}.

Since the exact solution is not known, we compute a reference solution $\w^\text{ref}$ on a grid with $50\,000$ uniform cells $C_i^\text{ref},~i=1,\dots,50\,000$. We are only interested in a solution at time $T$ and thus we drop the time index and denote by $w^\text{ref}_i$ the reference solution at $T=60$ on cell $C_i^\text{ref}$, its interpolant by $w^\text{ref}$ and its first component, the cancer cell density, by $c^\text{ref}$. In order to compare the accuracy of the introduced methods in space, we compute the discrete $L^1$-errors of the cancer cell densities:
\begin{equation}\label{discreteL1}
 E(N):=|c^\text{ref}-c^N|_{L^1_{disc}(\Omega)}=\sum_{\stackrel{C_i^N=(x_i-h/2,x_i+h/2],}{x_i~\in~ C_j^\text{ref}}}\operatorname{vol}(C_i^{N})~|c_i^N-c_j^\text{ref}|,
\end{equation}
where $w^N$ with its first component $\w^{N,(1)}=c^N$ is a numerical solution on $N$ cells $C^N_1,\dots,C_N^N$. We compute numerical solutions on $N$ cells for each method for
$$N\in \{100,~ 200,~ 400,~ 800,~ 1000,~ 2000,~ 3000,~ 4000,~ 5000\}$$
and plot $N$ against $E(N)$ in log-log scale in order to visualize the convergence of the method experimentally. Due to the limitations of the comparison of numerical simulations with reference solutions computed over very fine grids, the number of grid cells of the numerical solutions should not be more than $10 \% $ of the number of cells  of the reference. This is why we restrict our numerical experiments to $N=5000$.

\begin{figure}
\includegraphics[width=1\textwidth]{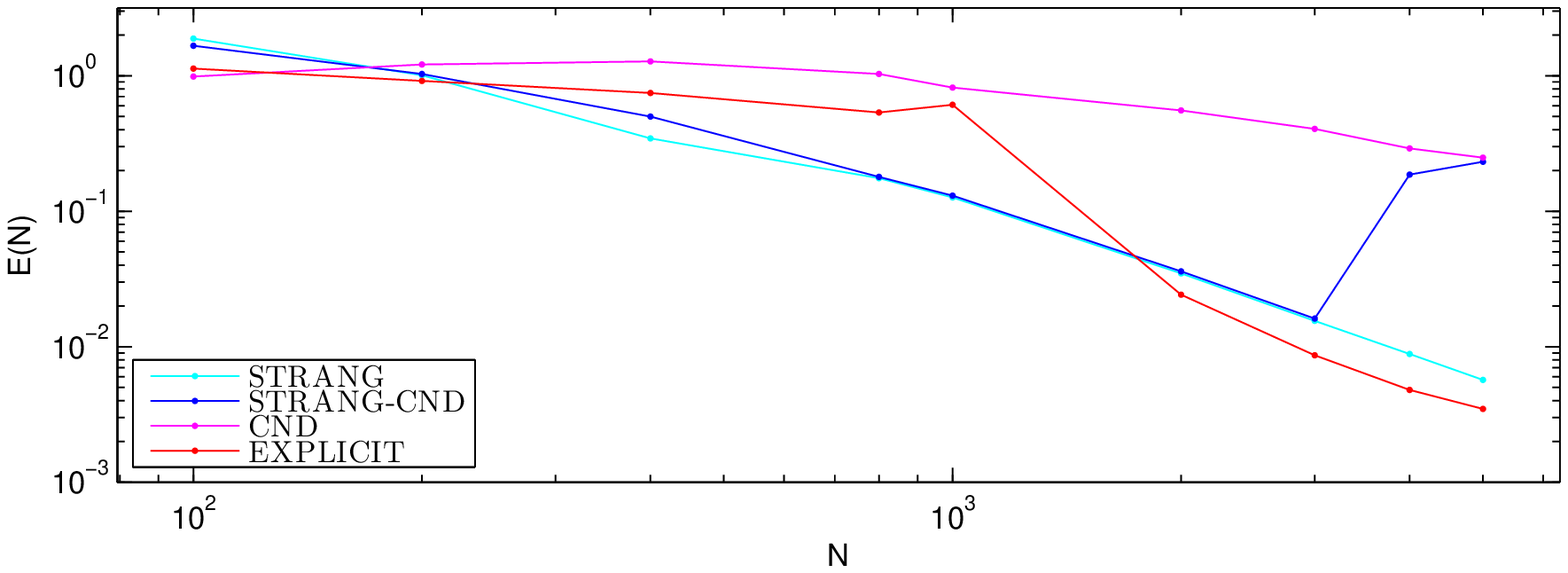}
\caption{Experimental convergence plot of splitting and first order methods in experiment I in log-log scale.}
\label{convergence_splitting}
\end{figure}

Figure \ref{convergence_splitting} indicates, that CND does not compute acceptable approximations. Though a first order convergence can be observed, the relative error is larger than $11\%$ even on grids with $N=5000$ cells. However, STRANG seems to converge second order in space and produces accurate solutions if about $1000$ cells are used. Due to the instabilities on fine meshes, STRANG-CND does not converge. The instabilities of the Strang-splitting, which employs  the Crank-Nicolson method for diffusion-terms, have been observed already in \cite{tyson2000fractional}. The good performance of the EXPLICIT method can be explained by the much smaller time steps it uses in order to be stable. The large amount of time steps, however, increases the computational costs.

Figure \ref{difference_splitting} demonstrates, that the application of implicit methods for reaction-terms in STRANG and IMEX3-ATC gives only negligible advantages over explicit reaction-terms on fine meshes, which further decrease with decreasing cell widths. In Figure \ref{convergence_ROS_IMEX} the second order convergence of IMEX3, ROS2 and ROS3 is demonstrated. All of them perform more accurately than STRANG, whereas IMEX3 gives the best results. The computational costs for IMEX3 are also less then the costs for ROS2 and ROS3, since the linearly implicit method handle reaction-terms implicitly, which makes them solve systems of linear equations with less sparse matrices. Surprisingly, the two stage Rosenbrock method gives a slightly better accuracy, than the three stage Rosenbrock method. The IMEX2 method develops instabilities on fine meshes and therefore does not converge. Though a slow convergence of the IMEX3 method with first order upwind fluxes can be observed, it does not produce accurate approximations.

\begin{figure}
\centering
 \includegraphics[width=1\textwidth]{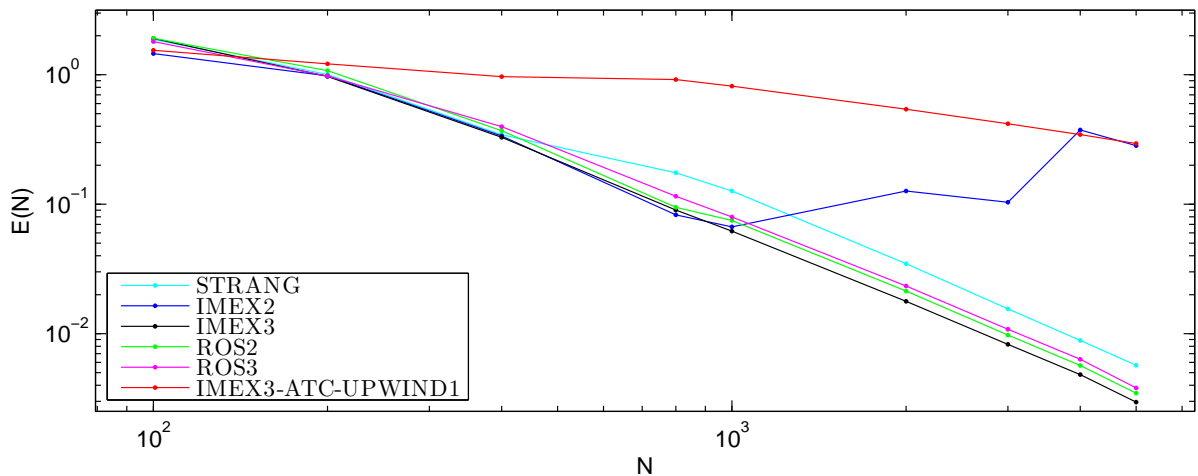}
\caption{Experimental convergence plot of different methods in experiment I in log-log scale}
\label{convergence_ROS_IMEX}
\end{figure}

\begin{figure}
\includegraphics[width=1\textwidth]{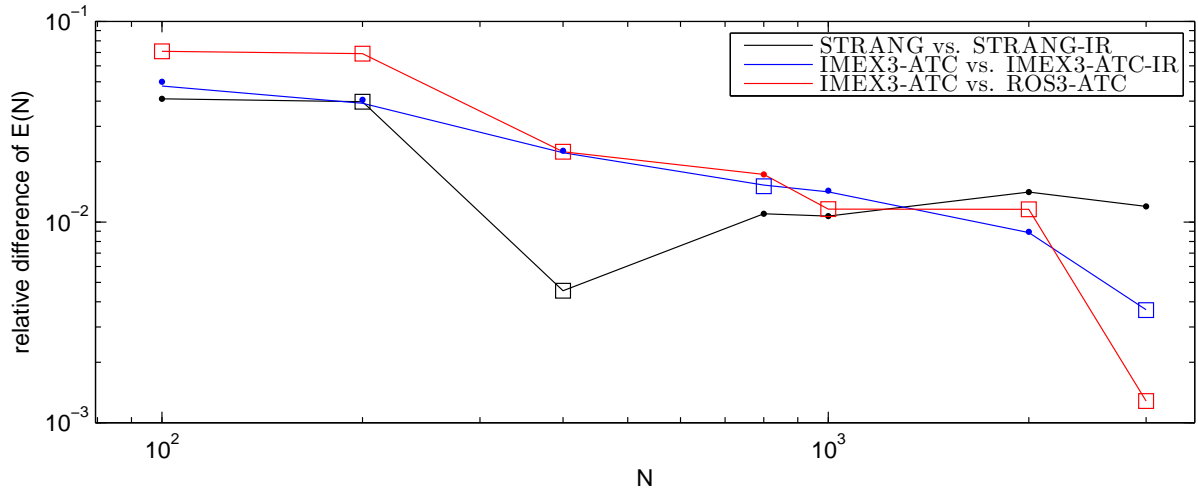}
\caption{Relative differences of the errors $|E^{M_1}(N)-E^{M_2}(N)|/E^{M_1}(N)$ of two methods $M_1$ and $M_2$ in log-log scale. Square markers symbolize, that the error of the first mentioned method is less than the error of the compared method.
Whether reaction terms are treated implicitly or explicitly has a minor, with grid size decreasing, impact on the error. }
\label{difference_splitting}
\end{figure}

Figure \ref{difference_splitting} shows that ROS3-ATC produces slightly more accurate solutions than IMEX3-ATC, but this advantage 	decreases with decreasing cell widths. Similarly we observe that the application of implicit methods for reaction-terms in STRANG and IMEX3-ATC gives only negligible advantages over explicit reaction-terms on fine meshes. Thus we state, that the advantage, which we gain, if we use implicit methods to handle the reaction-terms, is not worth the additional computational effort.
	
\begin{figure}
\includegraphics[width=1\textwidth]{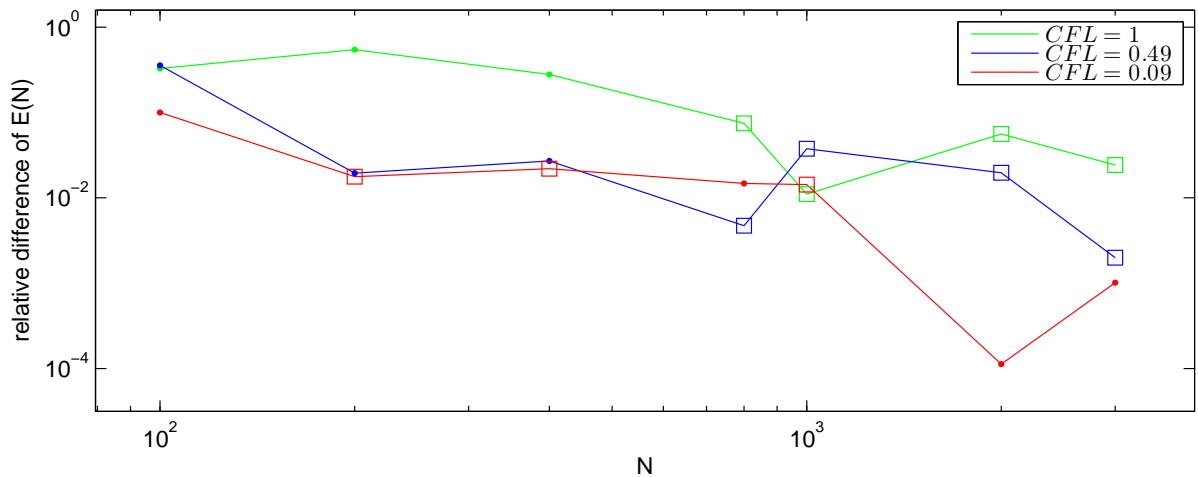}
\caption{The relative difference of IMEX3-ATC and IMEX3 with different Courant numbers in loglog scale. The decreasing relative errors indicate that Courant numbers smaller than $0.49$ do not improve the method worthwhile.} \label{difference_CFL}
\end{figure}

The influence of the CFL number on the accuracy of the IMEX3 method is much less than to the GODUNOV method and for sufficiently large number of cells, the IMEX3 method with CFL=0.49 produces almost as accurate results as its counterpart with adaptive time step control, see Figure \ref{difference_CFL}.

Table \ref{table_exp1} gives an overview of the experimental orders of convergence and the absolute discrete $L^1$-error of a sample solution with $2000$ cells as well as its computation time. The experimental order of convergence of a specified method is calculated based on its discrete $L^1$-errors for $N_1=2000$ and $N_2=5000$ cells
\begin{equation}\label{EOC}
	EOC = \frac{\log(E(N_1))-\log(E(N_2))}{\log(N_2)-\log(N_1)}.
\end{equation}
	
\begin{table}
\centering
		\begin{tabular}{|l|| >{$} c <{$} |>{$} c <{$}| >{$} c <{$}|}
	\hline
	& \textnormal{EOC} & \textnormal{Comperative CPU time} & \textnormal{discrete } L^1 \textnormal{ error}\\ \hline
	CND & 0.958 & 17.8 & 5.544\cdot 10^{-1} \\
	IMEX3-ATC-UPWIND1 & 0.691 & 48.7 & 5.414 \cdot 10^{-1}\\
 STRANG & 1.965 & 54.4 & 3.475 \cdot 10^{-2}\\
STRANG-IR & 1.965 & 177.5 &  3.426 \cdot 10^{-2}\\
EXPLICIT & 1.782 &  176.71 & 2.423 \cdot 10^{-2}\\
ROS3 & 2.044 & 246.39 & 2.337 \cdot 10^{-2}\\
 ROS2 & 2.025 & 165.1 & 2.131\cdot 10^{-2}\\
 IMEX3-ATC & 2.021 & 200.5 & 1.816\cdot 10^{-2}\\
 IMEX3-ATC-IR & 2.034 & 18931.6 & 1.800 \cdot 10^{-2} \\
 ROS3-ATC & 2.080 & 10773.5 & 1.795\cdot 10^{-2}\\
IMEX3 & 2.010 & 35.2 & 1.781 \cdot 10^{-2}\\
 \hline
\end{tabular}
\caption{Experimental orders of convergence, computation time and error of a sample approximation for $N=2000$ in experiment I}
\label{table_exp1}
\end{table}

\subsection{Experiment II}
In this experiment we investigate the performance of the methods in the case of a smooth solution.
\begin{figure}
\centering
\subfigure[$t=75$]{
\includegraphics[width=0.47\textwidth]{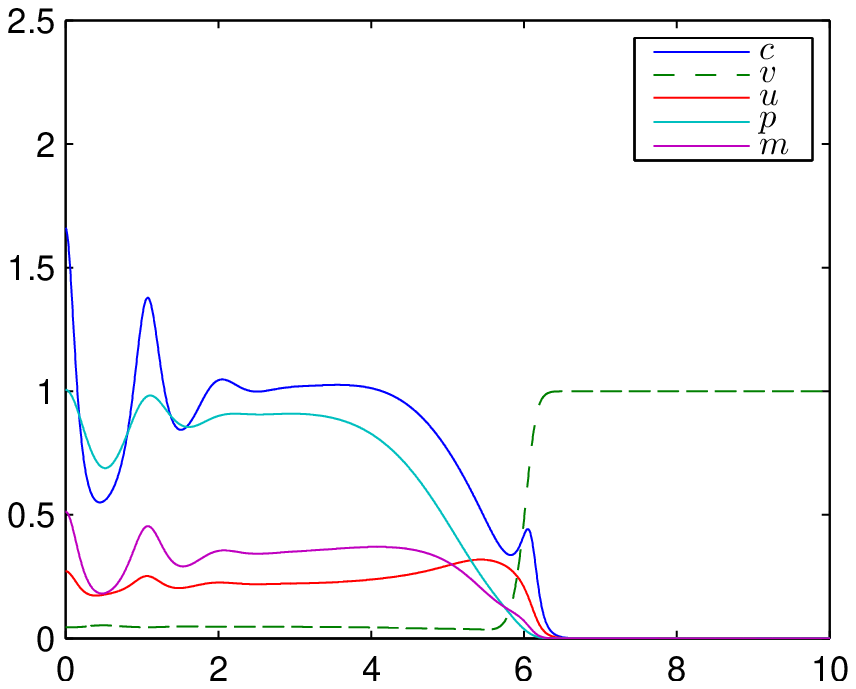}
}
\subfigure[$t=150$]{
\includegraphics[width=0.47\textwidth]{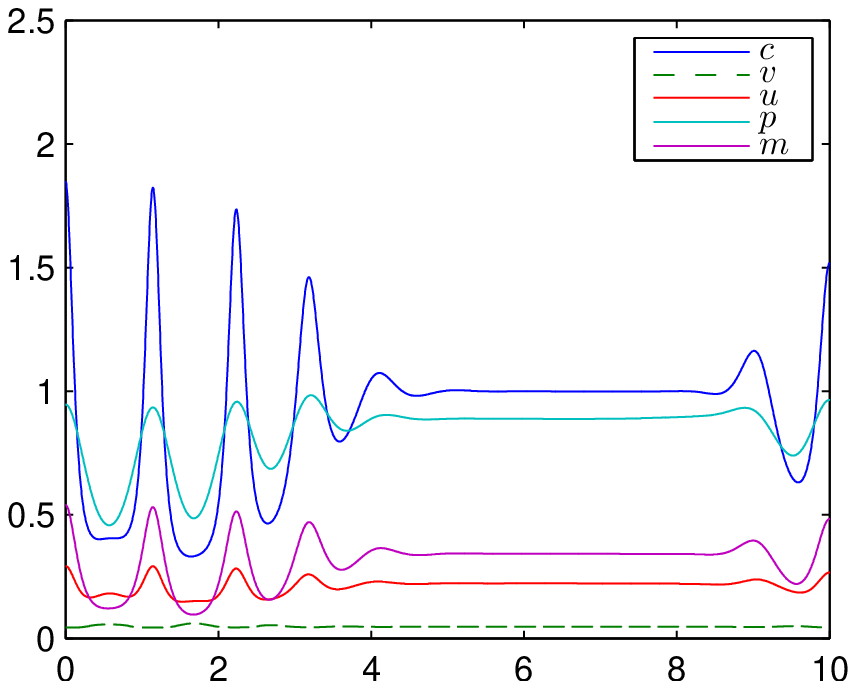}
}
\subfigure[$t=300$]{
\includegraphics[width=0.47\textwidth]{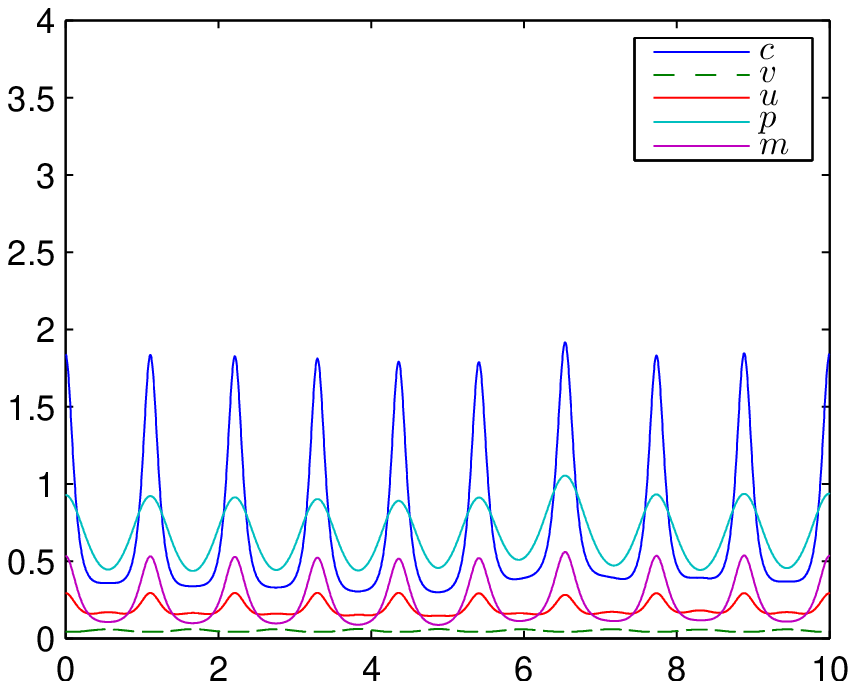}
}
\subfigure[$t=500$]{
\includegraphics[width=0.47\textwidth]{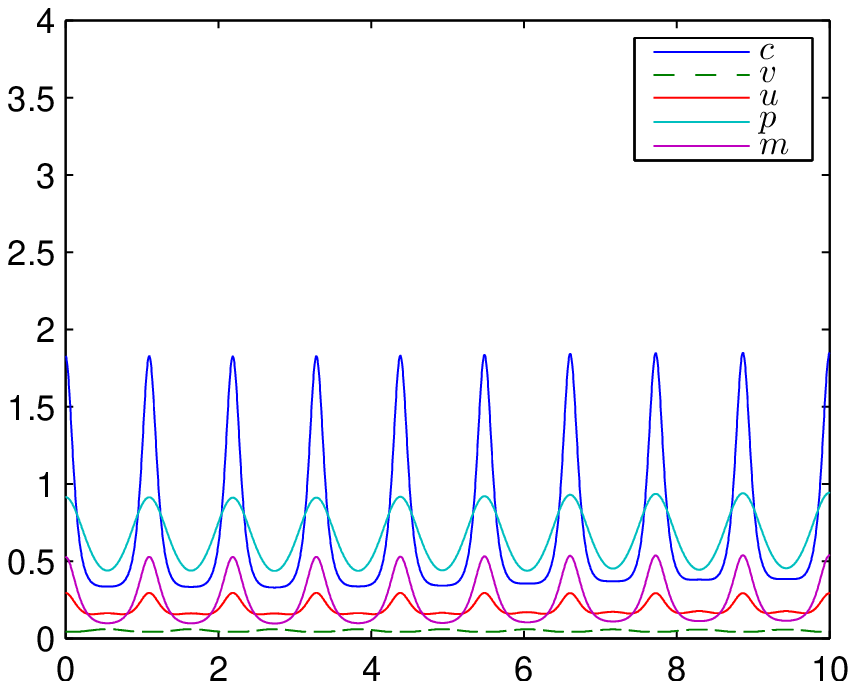}
}
\caption{Results of experiment II with parameterset $\mathcal{P}$, where the diffusion coefficient of the cancer cells is increased to $D_c=5.3~10^{-3}$}
\label{exp2fig}
\end{figure}
Figure \ref{exp2fig} exhibits the behavior of the solution of system \eqref{chaplolsystem} with homogeneous Neumann boundary conditions on the domain $\Omega=(0,10)$, initial conditions \eqref{exp1dinitial} and
parameter set $\mathcal{P}$ with an increased diffusion coefficient of the cancer cells,
\[D_c = 5.3~10^{-3}.\]
In order to estimate errors in this setting we make use of a reference solution $\w^\text{ref}$, computed with the IMEX3 method on $N=100\,000$ cells. Like before, we use a fixed Courant number $CFL=0.49$ and only compute solutions at the fixed time $T=50$. Test approximations are computed on the domain $\Omega=(0,5)$ distributed into $N$ cells with
\[N\in \{100,~ 200,~ 400,~ 800,~ 1000,~ 2000,~ 3000,~ 4000\},\]
and errors are calculated according to \eqref{discreteL1}.
\begin{figure}

\includegraphics[width=1\textwidth]{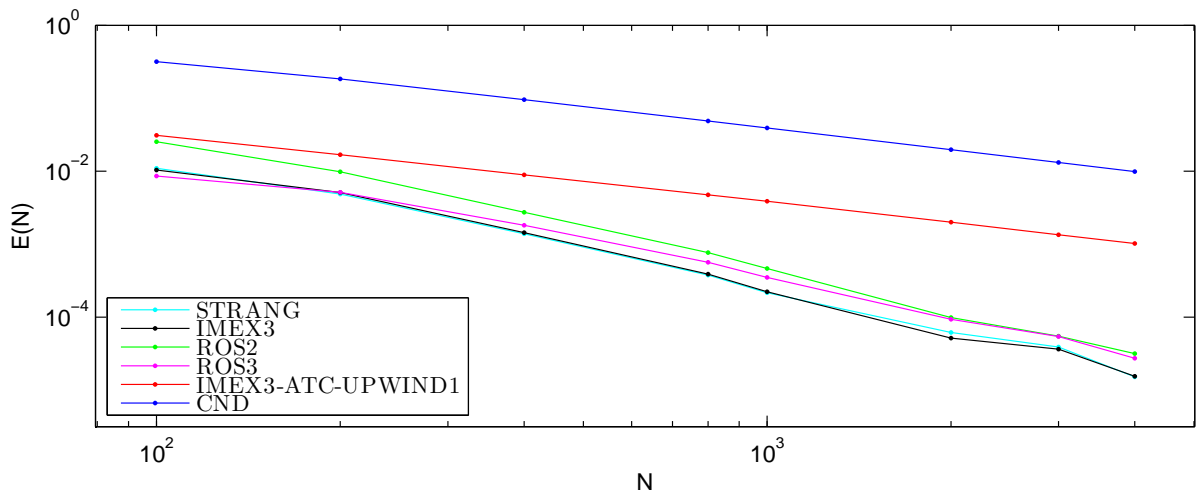}
\caption{Experimental convergence plot of the considered methods in experiment II in log-log scale} \label{difference_exp2}
\end{figure}

Figure \ref{difference_exp2} indicates that STRANG seems to be second order convergent in space. On the other hand, STRANG-CND suffers from instabilities even on coarse grids and does not converge as in the first experiment. The forward Euler method is unstable in this case although the time steps were limited according to the explicit discretization of diffusion. Similarly, instabilities occur in the IMEX2 method case where no further time step limitation is needed.

ROS2, ROS3 and IMEX3 appear to be second order accurate, while IMEX3 approximates best again. In this smooth case however, the Strang-splitting method performs similarly to IMEX3. The three stage Rosenbrock method is advantageous over the two stage Rosenbrock method, which reduces with the cell width. First order fluxes are capable to resolve the solution, as the slow convergence of IMEX3-ATC-UPWIND1 in Figure \ref{difference_exp2} demonstrates.

	\begin{figure}
	\centering
	\includegraphics[width=1\textwidth]{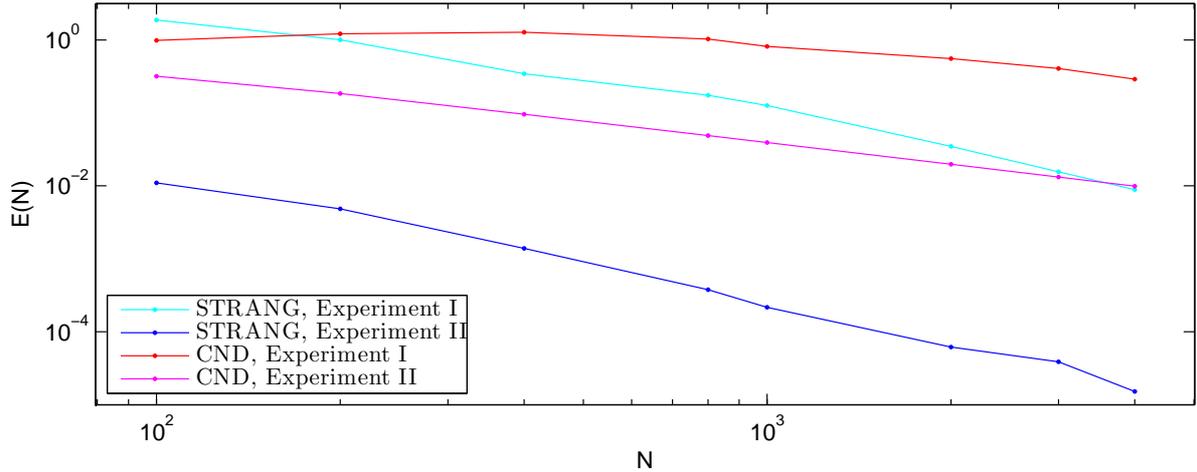}
	\caption{Experimental convergence plot of splitting methods in experiment I and experiment II in log-log scale}
	\label{convergence_12}
	\end{figure}

We notice in Experiment II, that the EOCs of the various methods, are the same as in Experiment I; there is though a significant drop in the actual  errors. This can be seen in  Figure \ref{convergence_12} and in Table \ref{table_exp2} by comparing the error convergence plots of the splitting methods. We also notice the marginal positive influence of adaptive time step control and implicit treatment of reaction-terms to the error. 

Table \ref{table_exp2} presents the EOC --computed according to \eqref{EOC}-- using $N_1=2000$ and $N_2=4000$. Again the discrete $L^1$-error refers to a sample approximation computed on $N=2000$ cells.

\begin{table}
\centering
\begin{tabular}{|l|| >{$} c <{$} | >{$} c <{$}|}
\hline
& \textnormal{EOC}  & \textnormal{discrete } L^1 \textnormal{ error}\\ \hline
CND & 0.935  & 1.980 \cdot 10^{-2} \\
 IMEX3-ATC-UPWIND1 & 0.651 & 2.007 \cdot 10^{-3} \\
 ROS2 & 1.906 & 9.853 \cdot 10^{-5} \\
 ROS3 & 1.884 & 9.338 \cdot 10^{-5} \\
STRANG-IR & 1.970 & 6.691\cdot 10^{-5} \\
 STRANG & 1.974 & 6.178\cdot 10^{-5} \\
IMEX3 & 1.882 & 5.167\cdot 10^{-5} \\
 IMEX3-ATC & 1.935 & 5.066\cdot 10^{-5} \\
 \hline
\end{tabular}
\caption{Experimental orders of convergence and error of a sample approximation for $N=2000$ in experiment II}
\label{table_exp2}
\end{table}

\begin{remark}
	In both, the larger and smaller diffusion case, the IMEX3 has produced approximations that are among the most accurate that we have achieved. At the same time, the computational cost of IMEX3 is short in comparison to other methods of the same accuracy. Therefore we consider IMEX3 in the non-uniform adaptive mesh case.
\end{remark}

\subsection{Adaptive mesh refinement}
Next, we investigate the benefits of adaptive mesh refinement by conducting experiment I again, starting on a grid with $400$ uniformly distibuted cells on $\Omega=(0,5)$. For our experiments we choose
IMEX3 as time integration method and define $n_\text{ref}=1, ~n_\text{coa}=3$ constantly. Further, we fix $L_\text{max} = 5$.
We compute the discrete $L^1$-errors over uniform and non-uniform grids using the  formula
\begin{equation}\label{discreteL1_time}
 E(t^n):=|c^\text{ref,n}-c^n|_{L^1_{disc}(\Omega)(t^n)}=\sum_{\stackrel{C_i^{N_n}=(x_i^n-h_i^n/2,x_i^n+h_i^n/2]}{x_i^n~\in~ C_j^{ref}}}\operatorname{vol}(C_i^{N_n})~|c_i^n-c_j^{ref,n}|,
\end{equation}
where $c^\text{ref}$ is the reference solution for the cancer cell density, computed by IMEX3 an a uniform mesh with $50\,000$ cells, and $c$ is the the solution obtained by the method whose error we want to compute.

We consider the absolute gradient of $c$ \eqref{monitor:gradient} with thresholds $C_\text{ref} = 55,~C_\text{coa} = 35$, and the estimated discretization error of the characteristic velocities \eqref{monitor:charSpeeds} with thresholds $C_\text{ref} = 7\cdot 10^{-4},~C_\text{coa} = 4\cdot 10^{-4}$ as monitor functions for the adaptation of the mesh. We refer to Figure \ref{refinement_example} for an impression of the numerical solution over and adaptively redefined grid, using the gradient as monitor function.

\begin{figure}
\subfigure[Experiment I at $t=40$]{
\includegraphics[width=1\textwidth]{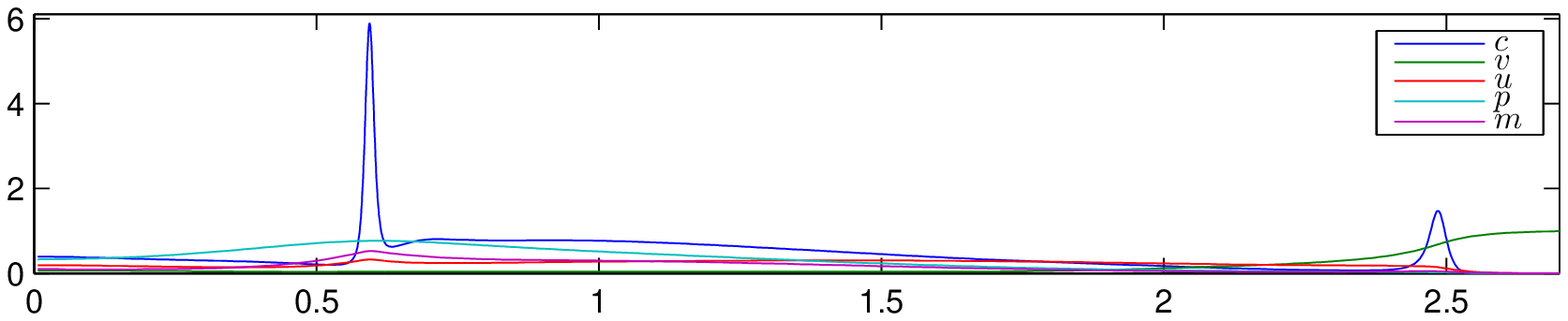}
}
\subfigure[Sizes of the cells]{
\includegraphics[width=1\textwidth]{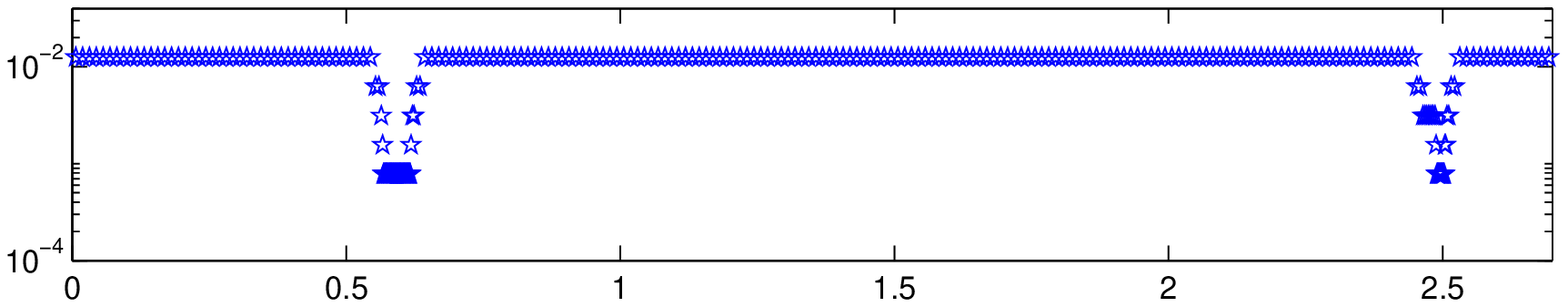}
}
\caption{A solution employing refinement of the cells according to the gradient of $c$. }
\label{refinement_example}
\end{figure}

\begin{figure}
\centering
\subfigure[Travelling of the front concentration and emerging of a second concentration for $0\leq t\leq 23$.]{
\includegraphics[width=0.48\textwidth]{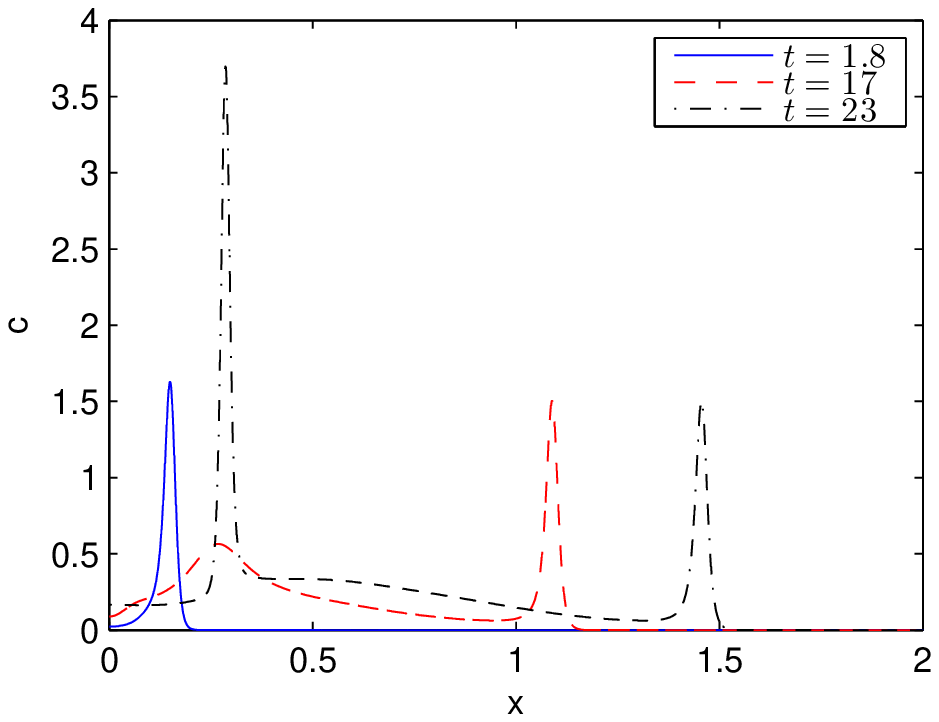}}
\subfigure[Emerging of a third concentration which merges with an existing one while $30 \leq t \leq40$.]{
\includegraphics[width=0.48\textwidth]{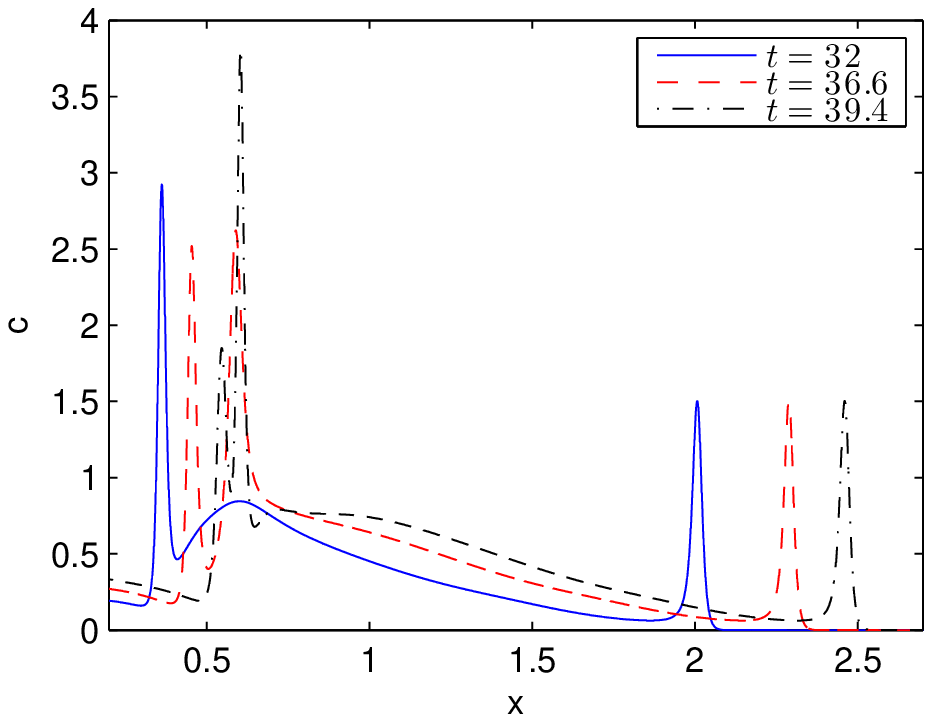}
}
\subfigure[Further merging and emerging of concentrations while $50\leq t \leq 60$]{
\includegraphics[width=0.96\textwidth]{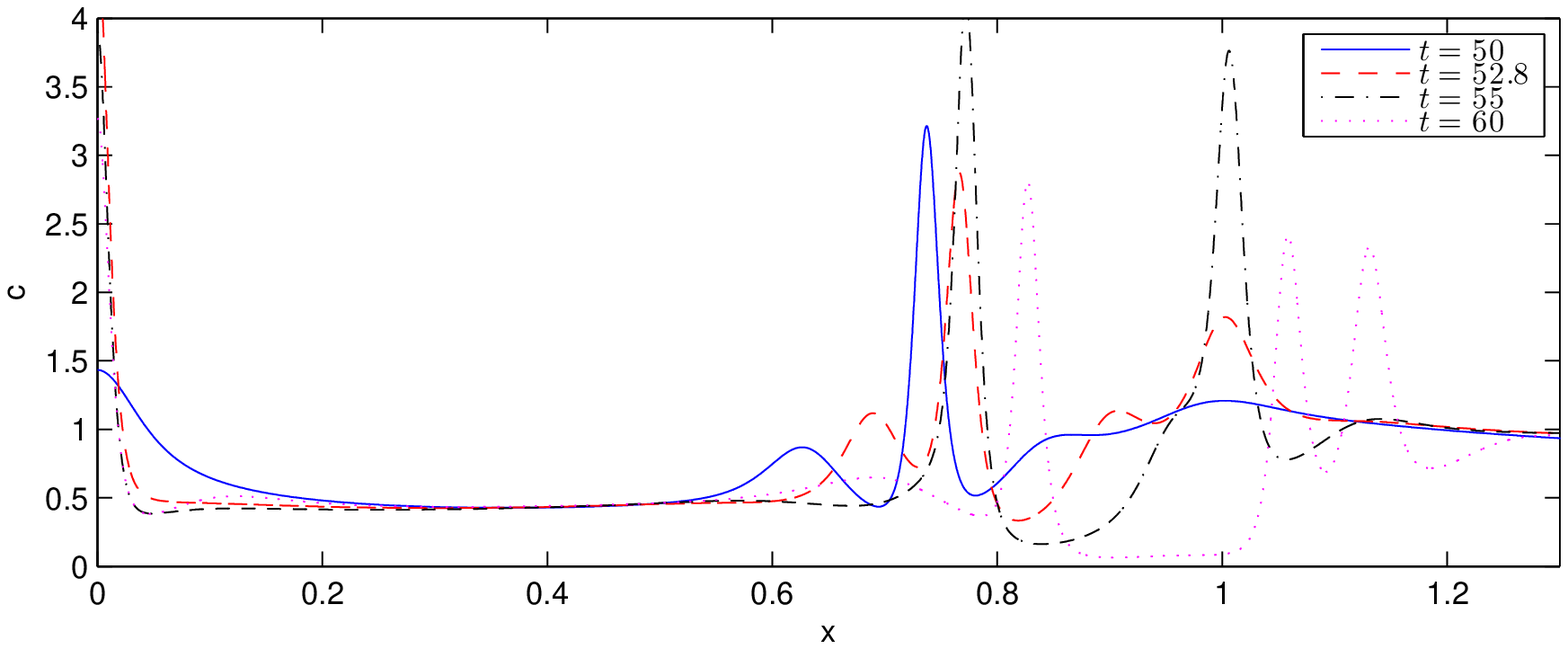}
}
\caption{Dynamics of the cancer cell concentration $c$ for $0\leq t \leq 60$ in experiment I.}
\label{experiment1_action}
\end{figure}

\begin{figure}
\centering
 \includegraphics[width=1\textwidth]{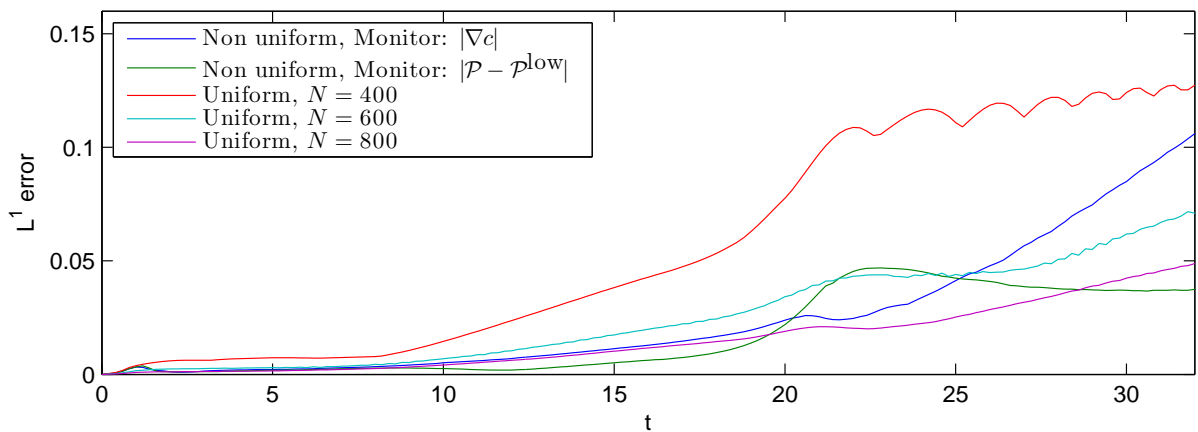}
\caption{A comparison of discrete $L^1$-errors as functions in time of uniform and nonuniform solutions for experiment I.}
\label{refinement_convergence_small_t}
\end{figure}

Figure \ref{experiment1_action} displays the dynamics that we aim to capture with the adaptive mesh refinement method that we employ. These involve moving, emerging, and merging of concentrations of the cancer cell densities.

Figure \ref{refinement_convergence_small_t} shows a visualization of the error \eqref{discreteL1_time} that the adaptive method, with the gradient as monitor function. The error is smaller than the uniform solution on $600$ cells up to the emerging of the second concentration (around $t=23$), albeit the refinement method uses less than $500$ cells. Employing discretization errors in monitor functions seems to be even more beneficial since the error of the second refinement method is during almost all times $t\in[0,35]$ less than the error of the uniform solution on $800$ cells. Note that the second refinement method does not use more than $500$ cells. However, around $t=40$, when concentrations of cancer cells merge, this advantage vanishes and the gradient based refinement methods produces smaller errors than the discretization based method. After $t=50$, when the dynamics become more complex, cf. Figure \ref{experiment1_action}(c), both refinement methods perform even worse than the uniform method on $400$ cells.

Further, we propose a modification of both refinement methods that aims for a better regularized structure of the grid. By ``smoothly refined grid'' we refer to a grid where the condition
\begin{equation}\label{smooth_grid}
 |L_i^n-L_{i+1}^n| \leq1,\quad i=1,\dots N_n-1,
\end{equation}
holds. To produce a smoothly refined grid we proceed as follows:
\begin{enumerate}
 \item If a cell $C_i^n$ which is to be refined has a neighbour $C_j^n,~ j\in\{i-1, ~i+1\}$ on a lower level $L_j^n<L_i^n$ , we refine the neighbour $C_j^n$ as well and iterate this strategy with $C_j^n$.
 \item If a cell $C_i^n$ that is marked for coarsening has a neighbour $C_j^n$ on a higher level which is not marked for coarsening, we do not coarsen $C_i^n$.
\end{enumerate}
\begin{figure}
\subfigure[Refinement controled by the gradient of $c$.]{
\includegraphics[width=1\textwidth]{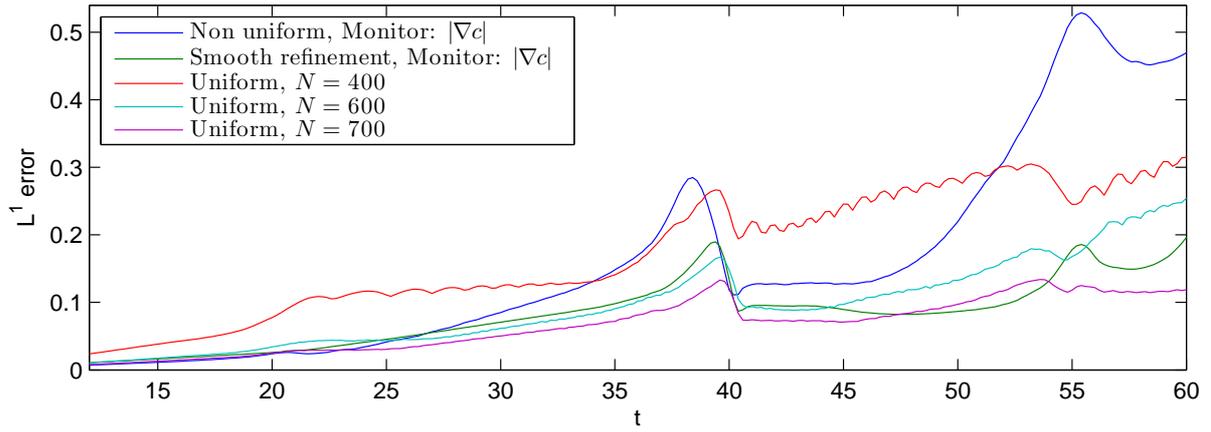}
}
\subfigure[Refinement controled by an estimation of the discretization error of the characteristic velocities.]{
\includegraphics[width=1\textwidth]{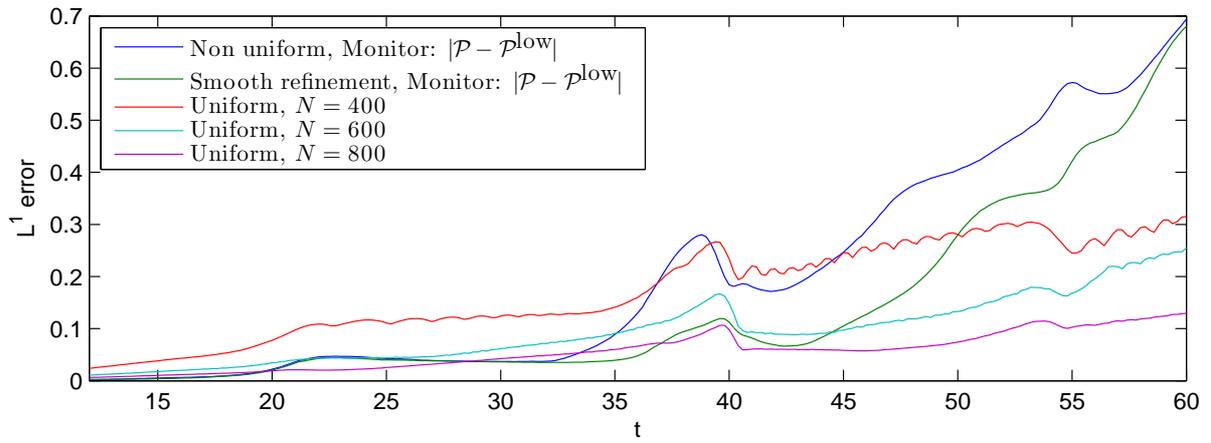}
}
\caption{Discrete $L^1$-errors as functions in time of solutions for experiment I until $t=60$. Benefits of a smoother refinement.}
\label{smooth_refinement}
\end{figure}

Figure \ref{smooth_refinement} shows a further reduction of the discrete $L^1$ errors when combining this strategy with the \textit{h-refinement} methods. The gradient controlled smooth refinement method produces solutions close to those on uniform grids with $700$ cells even for $t>40$ while the discretization error controled smooth refinement method performs comparably to an uniform solution on $800$ cells for $t\leq 45$. Reconstructing the first process of merging before $t=40$ could be significantly improved by employing smooth refinement however the error increases for later times. In the case of the smooth refinement method controlled by the discretization error, the error becomes as high as the error of its counterpart without smooth refinement. This happens at the end of the observed time interval $t=60$. However the gradient controlled smooth refinement method performs better than the uniform method on $600$ cells at $t=60$.

\begin{table}
\centering
\begin{tabular}{|c|  c || >{$} c <{$}|>{$} c <{$}|>{$} c <{$}|>{$} c <{$}|}
\hline
\multirow{ 2}{*}{Monitor} & smooth& \multirow{2}{*}{$\max \{ N_i,~t^i \leq 35\}$} & \multirow{ 2}{*}{$\sum_{i=1}^{m,~t^{m}=35} \frac{N_i}{m}$} & \multirow{ 2}{*}{$\max \{ N_i,~t^i \leq 60\}$} & \multirow{ 2}{*}{$\sum_{i=1}^{n,~t^{n}=60} \frac{N_i}{n}$}\\
& refinement & & & & \\ \hline
$|\nabla c|$ & no  & 484 & 443.6 & 603 & 471.1\\
$|\nabla c|$ & yes &500 & 455.2 & 597 & 484.2 \\
$|\mathcal{P}-\mathcal{P^\text{low}}|$ & no & 498 & 460.9 & 632 & 493.6 \\
 $|\mathcal{P}-\mathcal{P^\text{low}}|$ & yes & 481 & 449.6 &  648 & 478.5 \\
 \hline
\end{tabular}
\caption{Maximal number of cells and average number of cells used by different refinement methods for $t\in[0,35]$ and $t\in[0,60]$.}
\label{table_refinement}
\end{table}
\subsection{A 2D experiment}
We present results of a 2D-simulation, which has been conducted using IMEX3 on $\Omega=[-15,15]^2$. We employ uniform cells with grid size $h=(0.05,\, 0.05)^T$, the parameter set $\mathcal{P}$ \eqref{params} and the following initial conditions,
\begin{equation*}
  y(x) = \left\{
\begin{aligned}
&4 + 0.7 \sin(0.9x),~&x<0,\\
&7\sin(0.9 x)+0.008 x^3+4,~&0\leq x \leq 5,\\
&5+0.7\sin(4.5)+ 0.7 \sin(0.9(x-5)),~&x>5,
\end{aligned}
\right.
\end{equation*}
\begin{equation}
\left\{
\begin{aligned}
c_0(\x) &= \mathbf{1}_{\{x_2 \geq y(x_1)\}}(\x),\\
v_0(\x) &= 1-c_0(\x),\\
u_0(\x) &= 0.5c_0(\x),\\
p_0(\x) &= 0.05c_0(\x),\\
m_0(\x) &= 0,
 \end{aligned}
\right.
\end{equation}
for all $\x=(x_1,x_2)^T\in\Omega$. We display results in the window $\bar \Omega=[0,5]^2$, which includes $100 \times 100$ grid cells, since this domain stays untouched by reflections, which are caused by the homogeneous Neumann
boundary conditions, while $0\leq t\leq 200$.
\begin{figure}
\centering
\subfigure[Cancer $c$]{
\includegraphics[width=0.4\textwidth]{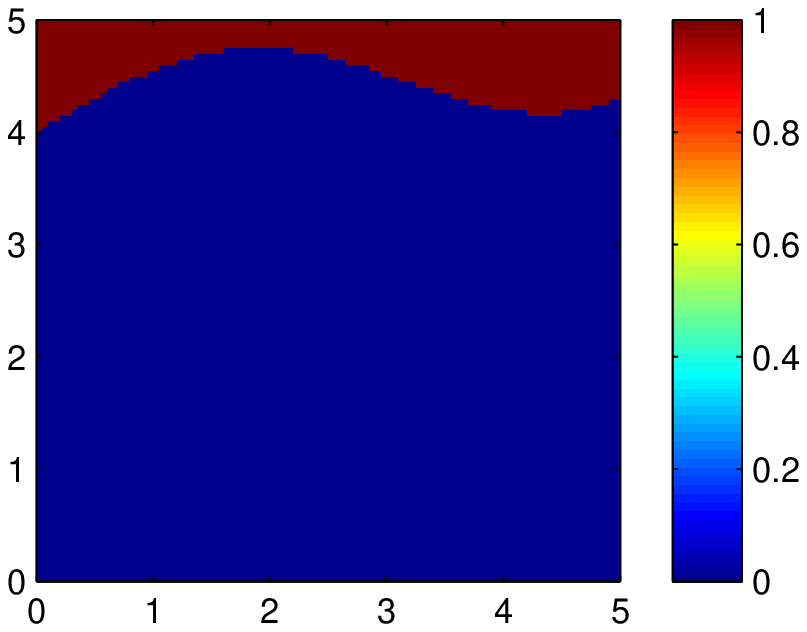}
}
\subfigure[Vitronectin $v$]{
\includegraphics[width=0.4\textwidth]{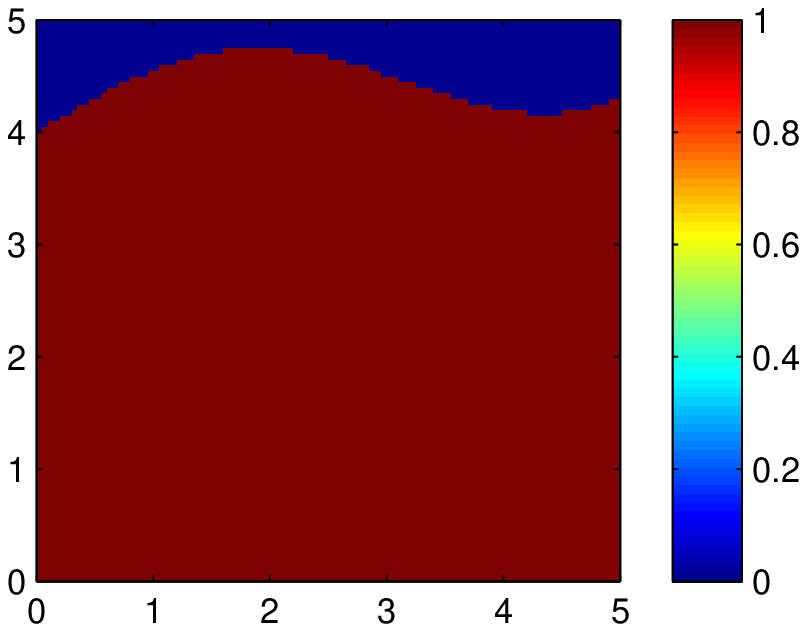}
}
\subfigure[Urokinase $u$]{
\includegraphics[width=0.25\textwidth]{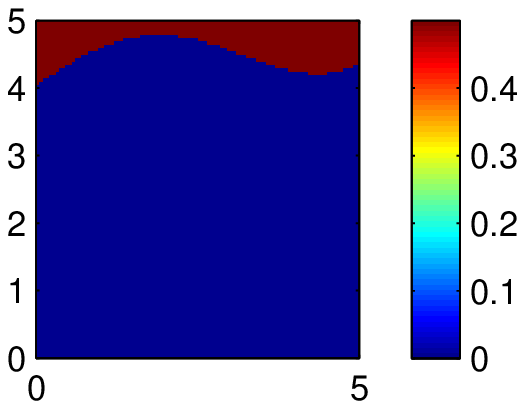}
}
\subfigure[PAI-1 $p$]{
\includegraphics[width=0.25\textwidth]{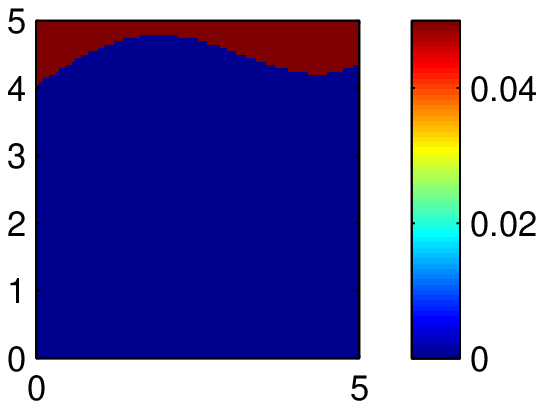}
}
\subfigure[Plasmin $m$]{
\includegraphics[width=0.25\textwidth]{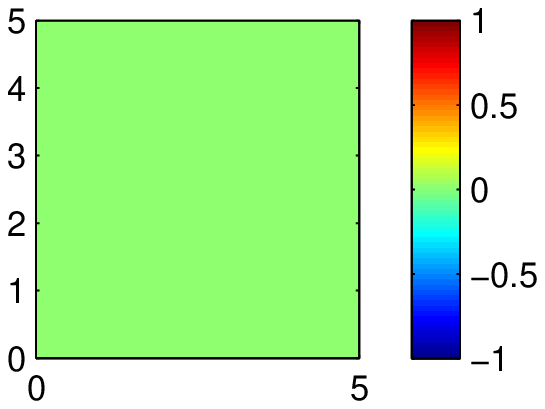}
}
\caption{Initial conditions of the 2D experiment}
\label{exp3T0fig}
\end{figure}

\begin{figure}
\centering
\subfigure[Cancer $c$]{
\includegraphics[width=0.4\textwidth]{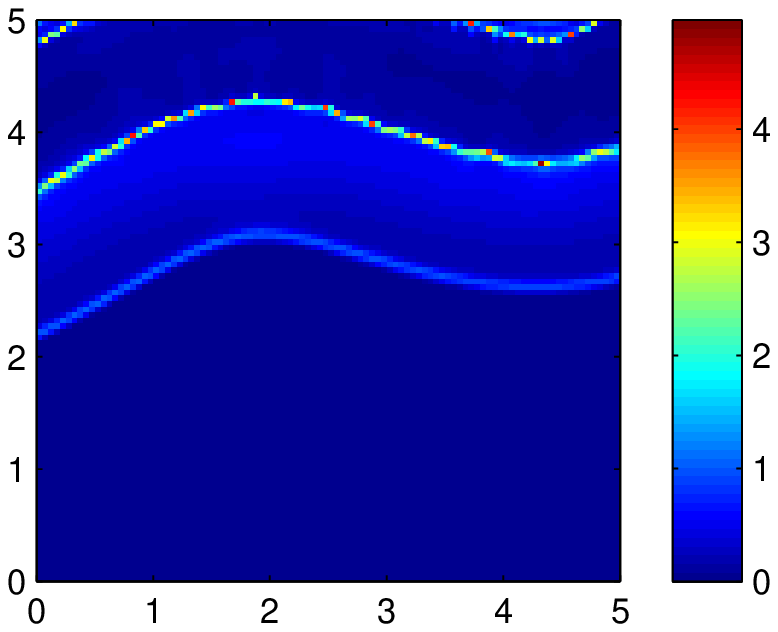}
}
\subfigure[Vitronectin $v$]{
\includegraphics[width=0.4\textwidth]{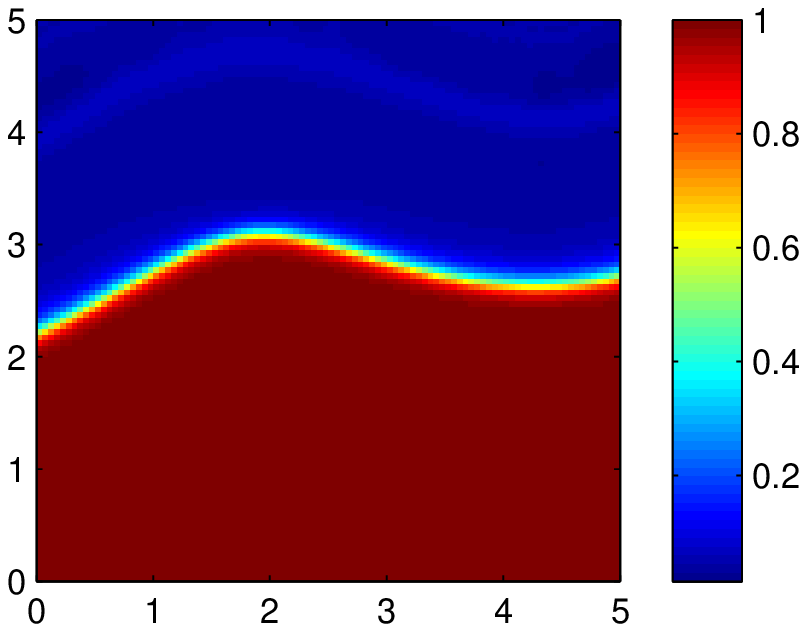}
}
\subfigure[Urokinase $u$]{
\includegraphics[width=0.25\textwidth]{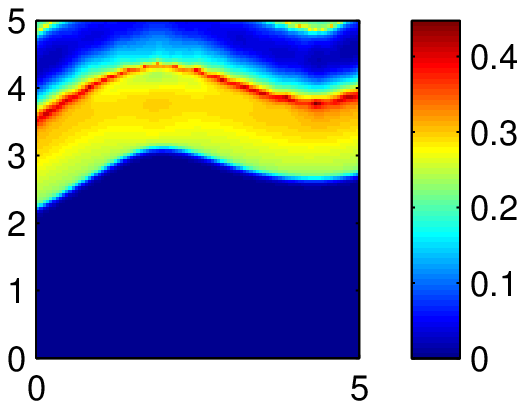}
}
\subfigure[PAI-1 $p$]{
\includegraphics[width=0.25\textwidth]{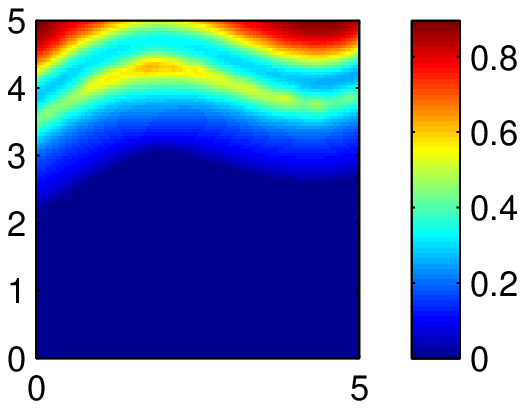}
}
\subfigure[Plasmin $m$]{
\includegraphics[width=0.25\textwidth]{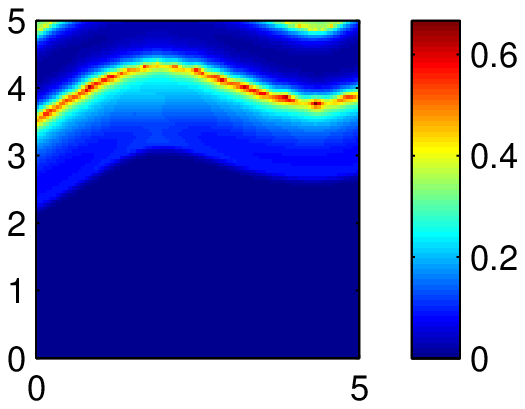}
}
\caption{2D experiment at $t=50$: degeneration of the ECM by a front of highly concentrated cancer cells.}
\label{exp3T50fig}
\end{figure}

The initial conditions are visualized in Figure \ref{exp3T0fig}. Subsequently, the accumulated cancer cells disseminate and degenerate the ECM. They  travel in negative $x_2$ direction, towards high densities of vitronectin. Formations of heterogeneous patterns of cancer cells, occurs at areas where the ECM has already degenerated by the propagating cancer cell front. No steady states have appeared until $T=200$. The corresponding results are shown in Figures \ref{exp3T50fig}, \ref{exp3T100fig}, and \ref{exp3T200fig}.
\begin{figure}
\centering
\subfigure[Cancer $c$]{
\includegraphics[width=0.4\textwidth]{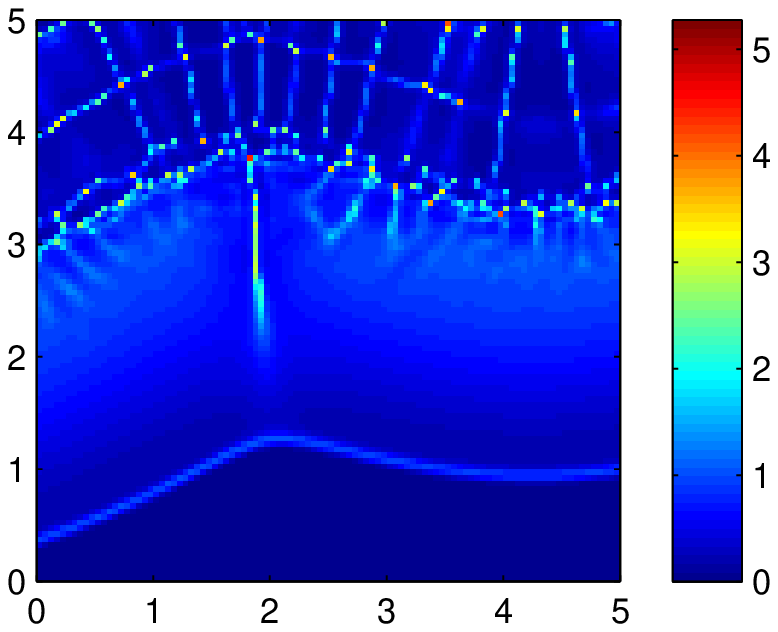}
}
\subfigure[Vitronectin $v$]{
\includegraphics[width=0.4\textwidth]{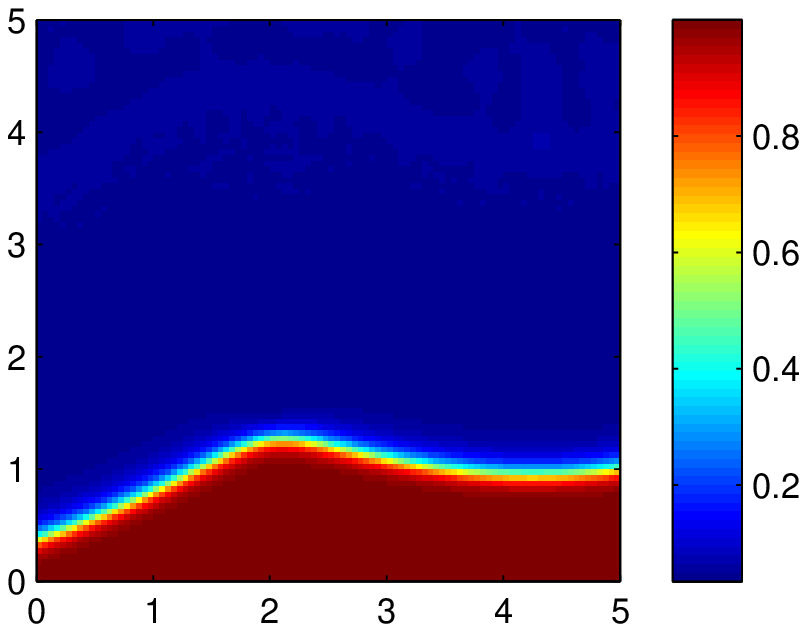}
}
\subfigure[Urokinase $u$]{
\includegraphics[width=0.25\textwidth]{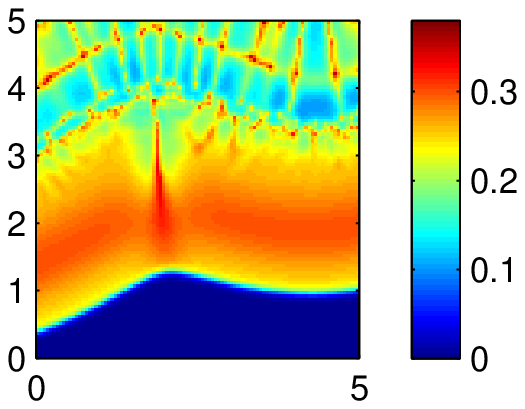}
}
\subfigure[PAI-1 $p$]{
\includegraphics[width=0.25\textwidth]{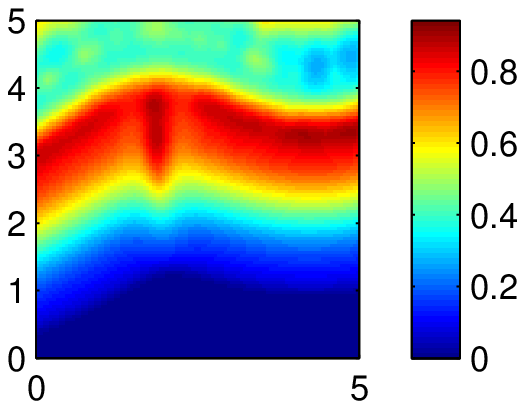}
}
\subfigure[Plasmin $m$]{
\includegraphics[width=0.25\textwidth]{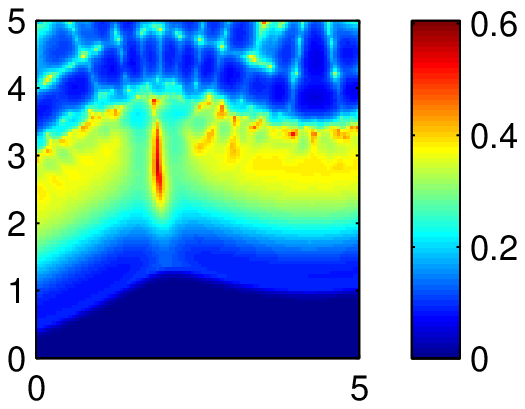}
}
\caption{2D experiment at $t=100$: formation of clusters in the cancer cell densities.}
\label{exp3T100fig}
\end{figure}
\begin{figure}
\centering
\subfigure[Cancer $c$]{
\includegraphics[width=0.4\textwidth]{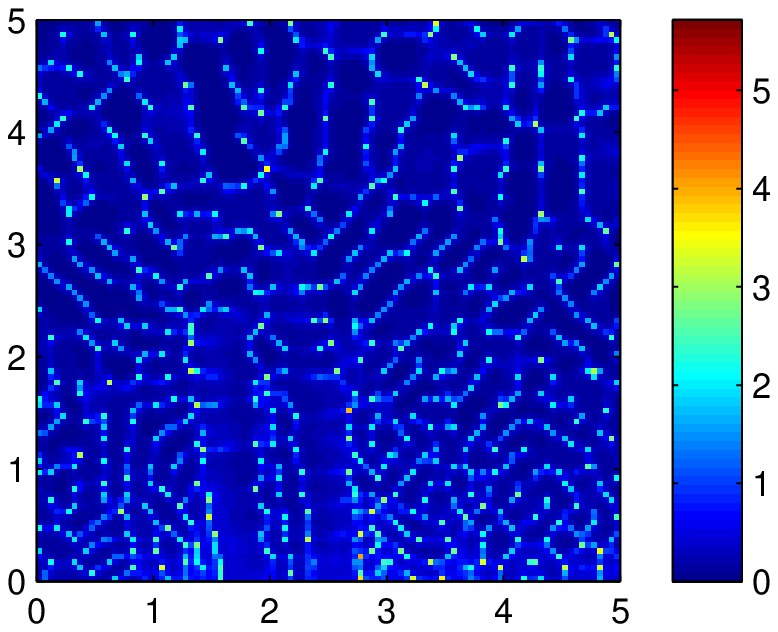}
}
\subfigure[Vitronectin $v$]{
\includegraphics[width=0.4\textwidth]{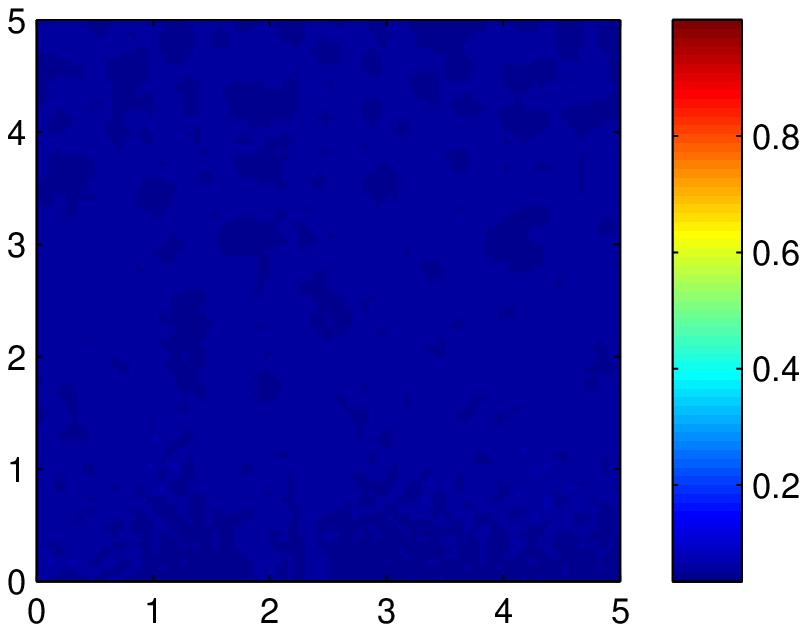}
}
\subfigure[Urokinase $u$]{
\includegraphics[width=0.25\textwidth]{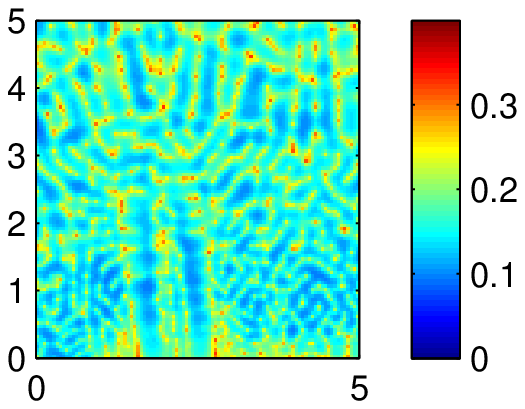}
}
\subfigure[PAI-1 $p$]{
\includegraphics[width=0.25\textwidth]{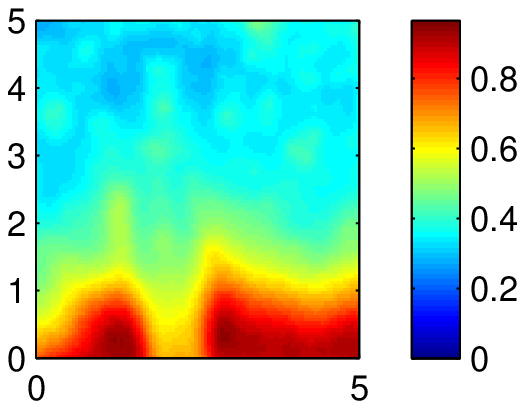}
}
\subfigure[Plasmin $m$]{
\includegraphics[width=0.25\textwidth]{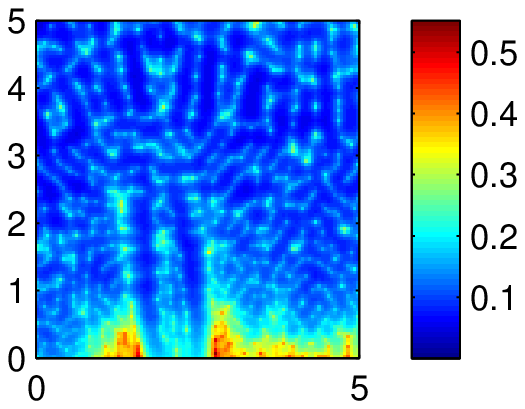}
}
\caption{2D experiment at $t=200$: formation of pattern in the cell and enzyme density structure.}
\label{exp3T200fig}
\end{figure}

\section*{Conclusion}
We address in this work the formation cancer cell clusters and the dynamics of the cancer cell invasion of the ECM. The model \eqref{chaplolsystem} we use, has been proposed in \cite{chaplain2005mathematical} and features the role of the serine protease uPA.

We employ, in Section \ref{methods}, a higher order finite volume method able to resolve the dynamics of the solution of the system \eqref{chaplolsystem}. Choosing the IMEX3 method for the time integration we observe, in Section \ref{experiments}, an experimental second order of convergence. However, even this high order method necessitates very fine discretization grids in order to produce accurate results. 

In Section \ref{experiments} we demonstrate  that these computational costs can be reduced by employing mesh refinement techniques, in particular h-refinement/cell bisection. We have noticed, with a series of test scenarios, that the best results are obtained if a) the gradient of the cancer cells is used as estimator function for the refinement/coarsening, and b) the grid is smoothly refined in the sense that neighbouring cells have a difference in refinement level at most $1$.

Analytically, we have studied in Section \ref{analysis} a reduced chemotaxis-haptotaxis model with logistic growth \eqref{small_system}, which we have compared to the original system \eqref{chaplolsystem} in the following sense: as in the case of the system \eqref{chaplolsystem} with parameter set $\mathcal{P}$ \eqref{params}, so in the case of the system \eqref{small_system}, we have found parameters for which the perturbation modes grow due to chemotaxis. We have thus, justified that the system \eqref{small_system} exhibits the same phenomena of merging and emerging concentration. We have confirmed this behaviour also numerically.

We have proved $L^\infty$ bounds on the solutions of both systems and this allows us to suggest/propose the smaller model \eqref{small_system}, and the corresponding parameter set, as a test case for the further development of mesh refinement techniques for the cancer invasion models. This test case can also be used for the extension of the mesh refinement technique that we have employed here, to the two dimensional case; a task that we will take upon on a subsequent paper.

As noted earlier, this is the first in a series of papers dealing with the invasion of cancer cells on the ECM under different chemical interaction pathways. In our future study we concentrate on one particular type of cancer and make our cancer-growth model more specific. One application  that we have in mind would be the breast cancer, which is of a solid nature and its growth behaviour is quite well-understood and documented.

\bigskip
{\bf Acknowledgements.} We gratefully acknowledge the support of the Center of Computational Sciences and the Internal University Research Funding of the University of Mainz.  N. Sfakianakis wishes also to acknowledge the support of the Alexander von Humboldt Foundation.

\bibliographystyle{alpha}
\bibliography{paper_aktuell}
\end{document}